\newtheorem{theorem}{Theorem}[section]
\newtheorem{lemma}[theorem]{Lemma}
\newtheorem{proposition}[theorem]{Proposition}
\theoremstyle{definition}
\newtheorem{example}[theorem]{Example}
\newtheorem{remark}[theorem]{Remark}
\newtheorem{question}[theorem]{Question}
\newcommand{\mazzi}[1]{\textcolor[RGB]{51,200,126}{\bf #1}}
\DeclareMathOperator{\card}{card}
\DeclareMathOperator{\re}{Re}
\DeclareMathOperator{\im}{Im}
\title{A note on the symmetry of sequence spaces}
\author{
Daniel Carando \thanks{Departamento de Matem\'atica, Facultad de Cs. Exactas y Naturales,
		Universidad de Buenos Aires and IMAS-UBA-CONICET, Int. G\"uiraldes s/n, 1428, Buenos Aires, Argentina (dcarando@dm.uba.ar). Supported by CONICET-PIP 11220130100329CO and ANPCyT PICT 2015-2299.}
\and
Martín Mazzitelli \thanks{Universidad Nacional del Comahue, CONICET, Departamento de Matemática, Facultad de Economía y Administración, Neuquén, Argentina (martin.mazzitelli@crub.uncoma.edu.ar). Supported by CONICET-PIP 11220130100329CO and ANPCyT PICT 2015-2299.}
\and
Pablo Sevilla-Peris\thanks{Institut Universitari de Matem\`atica Pura i Aplicada,
Universitat Polit\`{e}cnica de Val\`encia, cmno Vera s/n, 46022,
Val\`encia, Spain (psevilla@mat.upv.es) Supported by MINECO and FEDER project MTM2017-83262-C2-1-P}}
 \date{}
\begin{document}
\maketitle

\begin{abstract}
We give a self-contained treatment of symmetric Banach sequence spaces and some of their natural properties. We are particularly interested in the symmetry of the norm and the existence of symmetric linear functionals. Many of the presented results are known or commonly accepted but are not found in the literature.
\end{abstract}

\section{Introduction}

\epigraph{ Reality  likes  symmetry  and  faint  anachronisms.}{\textit{The South} \\ \textsc{J. L. Borges}}

In this expository work, we study some natural properties of symmetric Banach sequence spaces. We aim to give a simple and self-contained presentation of different results which are probably known but hard to find in the literature. One of our goals is to prove  that the two natural definitions for symmetry (for a Banach sequence space together with its norm) are actually equivalent (see Theorem~\ref{simequiv} below). We are also naturally led to  investigate the existence of symmetric linear functionals on these spaces.  
The structure of the article prioritizes the exposition and the development of ideas over briefness. Thus, we take our time to discuss different reasonings that may not lead to the desired results, but either give a hint about the correct path  or raise natural related questions which are interesting on their own.

\bigskip
Given $\mathbb{K}=\mathbb{R}$ or $\mathbb{C}$ we consider $\mathbb{K}^{\mathbb{N}}$, the vector space of all sequences $(x_n)_{n\in\mathbb{N}}$ with $x_n\in \mathbb{K}$. A  Banach sequence space is a subspace $X$ of $\mathbb{K}^{\mathbb{N}}$ endowed with a complete norm such that, if $x=(x_{n})_{n} \in \mathbb{K}^{\mathbb{N}}$ and $y=(y_{n})_{n} \in X$ are sequences so that $\vert x_{n} \vert \leq \vert y_{n} \vert$ for every $n$, then $x \in X$ and $\Vert x \Vert \leq \Vert y \Vert$.

A Banach sequence space is symmetric if $x_{\sigma} = (x_{\sigma(n)})_{n} \in X$ and $\Vert x_{\sigma} \Vert = \Vert x \Vert$ for every $x \in X$ and all permutation $\sigma$ of $\mathbb{N}$.
Let us note that if $X$ is symmetric then $\Vert e_{k} \Vert = \Vert e_{1} \Vert$ for all $k$ and, multiplying the norm by a number if necessary, we may assume
\begin{equation} \label{norma1}
\Vert e_{k} \Vert   = 1, \quad \text{ for all  } k\in\mathbb{N}.
\end{equation}
From now on unless stated the contrary, all Banach sequence spaces (symmetric or not) are assumed to satisfy \eqref{norma1}. Then a straightforward computation (left to the reader) shows that
every $X$ Banach sequence space satisfies
\begin{equation} \label{continuous inclusions}
\ell_{1} \hookrightarrow X \hookrightarrow  \ell_{\infty}
\end{equation}
with continuous inclusions of norm one. As a consequence,  convergence in norm implies coordinate-wise convergence.\\

Given a bounded sequence $x=(x_{n})_{n}$ its decreasing rearrangement $x^{*}=(x^*_n)_{n\in \mathbb{N}}$ is
defined by
\[
x^{*}_{n} := \inf \{ \sup_{k \in \mathbb{N} \setminus J } \vert x_{k} \vert  \colon  J \subset \mathbb{N} \  , \ \card{ J } < n \}, \quad \text{for each $n \in \mathbb{N}$.}
\]

Symmetric spaces are also usually defined as those satisfying that $x \in X$ if and only if $x^{*} \in X$ and, in this case $\Vert x \Vert = \Vert x^{*} \Vert$. Both definitions are commonly accepted as equivalent, but we could not find any complete proof of this fact in the literature, and the partial results are scattered on different publications. Our aim here is to provide with a unified, complete and self-contained proof of this fact, complemented with some other independently interesting questions that came across. So our goal is to prove the following theorem.
\begin{theorem}  \label{simequiv}
Let $X$ be a Banach sequence space. The following statements are equivalent.
\begin{enumerate}
\item \label{simequiv 1} $X$ is symmetric.
\item \label{simequiv 2} $x \in X$ if and only if $x^{*} \in X$ and, in this case, $\Vert x \Vert = \Vert x^{*} \Vert$.
\end{enumerate}
\end{theorem}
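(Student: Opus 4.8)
The plan is to prove the two implications separately, after two reductions valid in any Banach sequence space. First, from the ideal property applied to $|x|\le|x|$ and $|x|\le|x|$ one gets $\||x|\|=\|x\|$, so throughout I may replace $x$ by $|x|$ and assume $x\ge 0$. Second, I would record that the decreasing rearrangement is itself permutation invariant: writing $x^*_n=\inf\{\sup_{k\notin J}|x_k|:\card J<n\}$ and substituting $J\mapsto\sigma(J)$ shows $(x_\sigma)^*=x^*$ for every permutation $\sigma$. Granting these, the implication \eqref{simequiv 2}$\Rightarrow$\eqref{simequiv 1} is immediate: if $x\in X$ then $x^*\in X$, and for any $\sigma$ we have $(x_\sigma)^*=x^*\in X$, whence $x_\sigma\in X$ and $\|x_\sigma\|=\|(x_\sigma)^*\|=\|x^*\|=\|x\|$.

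The substance is \eqref{simequiv 1}$\Rightarrow$\eqref{simequiv 2}; assume $X$ symmetric and $x\ge 0$. I would first dispose of the finitely supported case: then $x$ and $x^*$ carry exactly the same multiset of entries (the same finitely many positive values, together with infinitely many zeros), so $x^*=x_\sigma$ for some permutation $\sigma$ and symmetry gives $\|x^*\|=\|x\|$ outright. For general $x$ the natural idea is to compare the truncations $x^{(N)}=(x_1,\dots,x_N,0,\dots)$. Matching, via a Hall-type argument, the largest entries of $x$ against $x^*_1\ge\cdots\ge x^*_N$ and extending to a permutation of $\mathbb N$, I would derive the one-sided truncation estimates $\|x^{(N)}\|\le\|x^*\|$ and $\|(x^*)^{(N)}\|\le\|x\|$ (the second up to an $\varepsilon$ absorbed into $\|\mathbf 1_{\{1,\dots,N\}}\|$, since the suprema defining $x^*$ need not be attained). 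Each estimate combines finite rearrangement invariance with the ideal inequality $\|y\|\le\|z\|$ for $0\le y\le z$.

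The genuine obstacle is to pass from these truncation estimates to $\|x\|=\|x^*\|$. One must resist invoking a Fatou property, which a general Banach sequence space need not possess: on $\ell_\infty$ the permutation invariant norm $\|x\|:=\|x\|_\infty+\limsup_n|x_n|$ satisfies $\|x^{(N)}\|=\|x\|_\infty\not\to\|x\|$ whenever $\limsup_n|x_n|>0$, so truncation norms do \emph{not} converge to the full norm. The resolution I would pursue isolates the tail level $\ell:=\lim_n x^*_n=\limsup_n|x_n|$ and splits $x^*=(x^*-\ell\mathbf 1)+\ell\mathbf 1$. The summand $x^*-\ell\mathbf 1$ is a decreasing \emph{null} sequence, and on null sequences the truncation estimates do lift to the full norm (there is no mass ``at infinity''), so this part contributes equally to $\|x\|$ and to $\|x^*\|$. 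The constant tail $\ell\mathbf 1$ is the delicate piece: here I would exploit that all indicators $\mathbf 1_S$ of infinite, coinfinite sets $S$ are mutually permutation equivalent and hence share a single norm, that $\mathbf 1\in X$ already follows from $x\in X$ when $\ell>0$ (since $\{n:x_n>\ell/2\}$ is infinite), and that $x^*\le\|x\|_\infty\mathbf 1$ then bounds the tail through the ideal property, yielding simultaneously the membership equivalence $x\in X\Leftrightarrow x^*\in X$.

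I expect this singular tail to be the main difficulty, precisely because triangle-inequality or averaging arguments lose a fixed factor while Fatou is unavailable. The correct device is a splitting of the norm into an order-continuous part, manifestly a function of $x^*$ through the truncations $\|(x^*)^{(N)}\|$, and a singular part supported at infinity, which one shows is \emph{also} determined by $x^*$ through the permutation equivalence of infinite–coinfinite supports. Combining the two parts and applying the resulting identity with $(x)^*=x^*$ gives $\|x\|=\|x^*\|$ and completes \eqref{simequiv 1}$\Rightarrow$\eqref{simequiv 2}.
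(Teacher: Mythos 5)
Your reduction to $x\ge 0$, the observation $(x_\sigma)^*=x^*$, and the finitely supported case are all fine, and you have correctly located the real obstruction: a symmetric Banach sequence space need not satisfy $\|x^{(N)}\|\to\|x\|$, so truncation estimates alone cannot close the argument. But your proposed resolution has a genuine gap. The claim that ``on null sequences the truncation estimates do lift to the full norm'' is false: the singular part of the norm need not be carried by a constant tail $\ell\mathbf{1}$. For a concrete counterexample inside $c_0$, take the Marcinkiewicz space $m_\Psi$ with $\Psi(n)=\log_2(n+1)$ and renorm it by $\|x\|_{\mathrm{new}}=\|x\|_{m_\Psi}+\limsup_n s_n(x)/\Psi(n)$, where $s_n(x)=\sum_{k=1}^n x_k^*$; the added term is subadditive (because $s_n(x+y)\le s_n(x)+s_n(y)$), monotone, permutation invariant and vanishes on finitely supported sequences, so $\|\cdot\|_{\mathrm{new}}$ is again a symmetric Banach sequence space norm with $\|e_k\|_{\mathrm{new}}=1$ and $m_\Psi\hookrightarrow c_0$. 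For the decreasing null sequence $x$ with $x_k^*=\Psi(k)-\Psi(k-1)$ one gets $\|x\|_{\mathrm{new}}=2$ while $\|x^{(N)}\|_{\mathrm{new}}=1$ for every $N$. Here $\ell=\lim_n x_n^*=0$, so your decomposition $x^*=(x^*-\ell\mathbf{1})+\ell\mathbf{1}$ degenerates and gives nothing, yet the truncation norms still fail to converge. Your final paragraph, asserting that the norm splits into an order-continuous part plus a singular part ``which one shows is also determined by $x^*$'', is precisely the statement to be proved for that singular part, and it does not follow from the permutation equivalence of indicators of infinite coinfinite sets: a singular functional such as $\limsup_n|x_n|$ depends on the values of the sequence, not only on its support.

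The paper circumvents this by an entirely different mechanism. It first proves a dichotomy (Proposition~\ref{aquila}): either $X\hookrightarrow c_0$ or $X=\ell_\infty$ with an equivalent norm. In the $c_0$ case it avoids truncations altogether: for an injection $\pi$ it splits $x_\pi=(x_\pi)_{I_1}+(x_\pi)_{I_2}$, where $I_1$ is an \emph{infinite} set along which the entries have $\ell_1$-mass less than $\varepsilon$ (possible for any null sequence); the $I_1$ part is controlled via $\ell_1\hookrightarrow X$, and the infinitude of $I_1$ leaves room to extend $\pi|_{I_2}$ to a genuine permutation, so the $I_2$ part is dominated by a permutation of $x$. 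This yields $\|x_\pi\|\le\|x\|$ and $\|x\|=\|x'\|$ (Proposition~\ref{prop closing up}), whence $\|x\|=\|x^*\|$ because $x^*$ is a permutation of $|x'|$. In the $\ell_\infty$ case the paper works on the dense set of sequences taking finitely many values and averages $N$ carefully chosen permutations so that the averages converge in the sup norm, which is how the singular behaviour is absorbed. To repair your argument you would need at least the first of these devices for the $c_0$ case and something like the averaging argument for $\ell_\infty$.
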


Before we proceed to the proof (that we give in Section~\ref{proof thm symmetric}) let us make a couple of comments. First of all, if $\sigma$ is a permutation of $\mathbb{N}$, one clearly has $x^{*} = (x_{\sigma})^{*}$ for every $x$ and, therefore,
\ref{simequiv 2} implies \ref{simequiv 1} in the previous theorem. The proof only needs to focus on the reverse implication.\\

Garling was the first one to consider symmetric sequence spaces, with a different point of view from the one we take here. Let us briefly explain it. For him a sequence space is a subspace $X$ of $\omega$
($\mathbb{K}^{\mathbb{N}}$ endowed with the topology of coordinatewise convergence, which makes it a Fr\'echet space) satisfying that if $x \in \omega$ and $y \in X$ are so that $\vert x_{n} \vert \leq \vert y_{n} \vert$
for every $n$, then $x \in X$. It is symmetric whenever $x_{\sigma} \in X$ for every $x \in X$ and every permutation $\sigma$ (note that, since he was working with locally convex spaces no reference to a norm is made). He showed in
\cite[Proposition~6]{Gar} (see also Proposition~\ref{aquila}) that if $X$ is symmetric, then either $X \subseteq c_{0}$, $X=\ell_{\infty}$ or $X=\omega$ and in \cite[Proposition~7]{Gar} that, if $X \subseteq c_{0}$ then
$x^{*} \in X$ if and only if $x \in X$. Since this is obviously satisfied by $\ell_{\infty}$ and $\omega$, this gives a sort of non-normed version of Theorem~\ref{simequiv}. Let us see now how do the norms come into play.\\

If $X$ is a symmetric Banach sequence space, a  mapping $\gamma\colon X\to \mathbb{K}$ is said to be symmetric if $\gamma(x)=\gamma(x_\sigma)$ for every permutation $\sigma$ of $\mathbb{N}$. Then, a Banach sequence space
is symmetric (in `our' sense) if and only if it is symmetric (in the sense of Garling) and moreover the norm defining the topology is symmetric. The following example shows that being symmetric in the sense of Garling does not necessarily imply that the norm is symmetric.
\begin{example}
 Take the sequence of weights $w=(w_n)_n$ given by $w_1=\frac{1}{2}$ and $w_n=1$ for all $n\geq 2$ and let $X=\ell_1(w)=\{x\in \mathbb{K}^{\mathbb{N}} \colon \sum_n x_n w_n <\infty\}$ with the natural norm given by
 \[
 \|x\|_{X}=\sum_{n=1}^\infty |x_n| w_n.
 \]
 Then it is clear that $x\in X$ if and only if $x_\sigma\in X$ for any permutation $\sigma$ of $\mathbb{N}$, but since
$\frac{1}{2}=\|e_1\|_X\neq \|e_n\|_X=1$ for $n\geq 2$, the norm is not symmetric.
 \end{example}

Summarizing, one implication in Theorem~\ref{simequiv} is obvious, and the part of the remaining implication not involving the norm is already covered by Garling's results. So, what is left to complete the proof is to see that if the
norm of $X$ is symmetric norm, then $\|x\|_X=\|x^*\|_{X^*}$ for every $x\in X$.\\

A first attempt could be to try to disprove Theorem~\ref{simequiv} by finding a symmetric Banach sequence space the norm of which does not satisfy the condition. We already knew (see e.g. \cite[Proposition~3.a.3]{LiTz77}) that
this is not possible whenever $X \hookrightarrow c_{0}$. So, taking into account \cite[Proposition~6]{Gar} (or Proposition~\ref{aquila}) the only possibility is to consider $\ell_{\infty}$ renormed in some convenient way.
\begin{remark}
Suppose we could find some symmetric linear functional $\gamma$ on $ \ell_{\infty}$ so that $\gamma (1,1,1, \ldots) =1$. In that case, considering $X=(\ell_\infty, \|\cdot\|_X)$ with
\[
\|x\|_X=\|x\|_\infty+|\gamma(x)|.
\]
This is clearly a symmetric norm. Take $x=(1,0,1,0,\dots)$, so that $x^*=(1,1,1,1,\dots)$.
By the symmetry and linearity of $\gamma$,
\[
2\gamma(1,0,1,0,\dots)=\gamma(1,0,1,0,\dots)+\gamma(0,1,0,1,\dots)=\gamma(1,1,1,1,\dots)=1
\]
wich gives $\gamma(1,0,1,0,\dots)=\frac{1}{2}$. Then,
\[
2=\|(1,1,1,1,\dots)\|_X = \Vert x^{*} \Vert_{X} \neq \Vert x \Vert_{X} = \|(1,0,1,0,\dots)\|_X=1+\frac{1}{2}.
\]
\end{remark}
As Theorem~\ref{simequiv} holds, we deduce form the previous remark that there is no symmetric linear functional $\gamma$ on $\ell_\infty$ satisfying $\gamma(1,1,1,\dots)\neq 0$. But, as a matter of fact, we can prove in a straightforward way that on $\ell_\infty$ there is no symmetric linear functional at all (except of course the zero functional).

\begin{proposition}\label{no symm on linfty}
There is no symmetric linear functional $\gamma \nequiv 0$ on $\ell_{\infty}$.
\end{proposition}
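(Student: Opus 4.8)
The plan is to show that a symmetric linear functional on $\ell_\infty$ must annihilate every element, by exploiting the tension between symmetry (permutation invariance) and the fact that $\ell_\infty$ contains sequences that can be "spread out" into infinitely many disjoint permuted copies. The key mechanism is that symmetry forces the functional to take the same value on any sequence and its shifts/rearrangements, and combined with linearity and boundedness this will squeeze the value to zero.

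First I would observe that a symmetric linear functional $\gamma$ on $\ell_\infty$ is automatically bounded: indeed, by the closed graph theorem (or a Baire-category argument), a linear functional that is invariant under all permutations and defined on all of $\ell_\infty$ cannot be unbounded, since one could otherwise construct via permutations a sequence on which $\gamma$ blows up while staying norm-bounded. Granting continuity with norm $\|\gamma\|=M$, the heart of the argument is a \emph{spreading} construction. Given any $x\in\ell_\infty$, I would take a single nonzero value, say build from $x$ a sequence $y$ supported on a set that can be partitioned into $N$ disjoint infinite blocks, each of which is a permuted copy of the others. By symmetry $\gamma$ takes the same value $c$ on each block's contribution, so $\gamma(y)=Nc$, while $\|y\|_\infty$ stays fixed; boundedness then gives $N|c|\le M\|y\|_\infty$ for every $N$, forcing $c=0$.

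Concretely, the cleanest route is to first prove $\gamma(e_k)=0$ for the standard basis vectors (immediate since all $e_k$ are permutations of each other, so $\gamma(e_k)=\gamma(e_1)=:a$, and the partial sums $e_1+\cdots+e_N$ have norm $1$ while $\gamma(e_1+\cdots+e_N)=Na$, whence $a=0$). Then I would extend this to show $\gamma$ vanishes on all of $c_{00}$ (finitely supported sequences) by linearity, and more importantly on the characteristic sequences $\mathbf{1}_A$ of infinite sets $A$ that admit a partition into infinitely many translated/permuted copies. The decisive step is to handle an \emph{arbitrary} $x\in\ell_\infty$: I would split $\mathbb{N}$ into infinitely many pairwise disjoint infinite subsets $A_1,A_2,\dots$ and define sequences $x^{(j)}$ which are permuted copies of $x$ supported on $A_j$; then $\|\sum_{j=1}^N x^{(j)}\|_\infty=\|x\|_\infty$ for all $N$, yet $\gamma(\sum_{j=1}^N x^{(j)})=N\gamma(x)$, so $N|\gamma(x)|\le M\|x\|_\infty$ for all $N$, giving $\gamma(x)=0$.

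\textbf{The main obstacle} will be making the spreading construction fully rigorous for a general bounded $x$: one must exhibit an explicit permutation $\sigma_j$ of $\mathbb{N}$ carrying the support of $x$ into the block $A_j$ so that $x^{(j)}=x_{\sigma_j}$ really is a genuine permutation (not merely a partial injection), and verify that finitely many of these overlap only on the zero coordinates so that $\|\sum_{j\le N} x^{(j)}\|_\infty = \|x\|_\infty$ exactly. Care is needed because $x$ itself may have full support, so the $A_j$ cannot simply be "copies of $\operatorname{supp} x$"; the correct approach is to choose the $A_j$ as a partition of $\mathbb{N}$ into infinite sets and let each $\sigma_j$ be a bijection $\mathbb{N}\to A_j$ composed suitably, defining $x^{(j)}_n = x_{\sigma_j^{-1}(n)}$ for $n\in A_j$ and $0$ otherwise, which is the restriction of a genuine permutation. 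Once the construction is pinned down, the boundedness estimate closes the argument immediately.
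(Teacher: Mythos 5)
The decisive step of your argument --- spreading an arbitrary $x\in\ell_\infty$ into $N$ disjoint copies $x^{(1)},\dots,x^{(N)}$ with $\gamma(x^{(j)})=\gamma(x)$ --- has a genuine gap. Symmetry only gives $\gamma(x_\sigma)=\gamma(x)$ for \emph{bijections} $\sigma$ of $\mathbb{N}$. Your $x^{(j)}$, supported on $A_j$ and vanishing on the infinite set $\mathbb{N}\setminus A_j$, is the image of $x$ under an \emph{injection}; you call it ``the restriction of a genuine permutation,'' but a restriction of a permutation is not a permutation, and a sequence with infinitely many prescribed zeros cannot be a rearrangement of, say, $x=(1,1,1,\dots)$, which has none. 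So $\gamma(x^{(j)})=\gamma(x)$ is unjustified precisely in the cases where you need it (any $x$ with only finitely many zero coordinates); invariance of $\gamma$ under injections is essentially equivalent to the conclusion you are after, so invoking it begs the question. The gap is repairable: write $x=x_E+x_O$ (the restrictions of $x$ to the even and odd indices, extended by zero); each summand has an infinite zero set, so for each of them one really can produce $N$ pairwise disjointly supported \emph{genuine} permuted copies, and your boundedness estimate then forces $\gamma(x_E)=\gamma(x_O)=0$. This even/odd splitting is exactly the paper's device: it writes $(1,0,1,0,\dots)$ as a sum of two sequences each of which \emph{is} a true permutation of it, deduces $a=a+a$, hence $a=0$, propagates this to all $0$--$1$ sequences by sums and differences, then to finite-valued sequences by linearity, and finally to all of $\ell_\infty$ by density.

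A secondary point: your claim of automatic continuity via the closed graph theorem is not an argument --- nothing shows a priori that the kernel of a symmetric linear functional on $\ell_\infty$ is closed, and the sketched ``blow-up via permutations'' does not produce a contradiction. The paper, whose final step also uses density of the finite-valued sequences, simply works with continuous functionals; you should assume $\gamma\in\ell_\infty^*$ rather than assert automatic boundedness.
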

\begin{proof}
Since
$$
(1,0,1,0,1,0,1,\dots)=(1,0,0,0,1,0,0,0,1,\dots)+(0,0,1,0,0,0,1,0,0,0,1,\dots),
$$
by linearity we have
$$
\gamma(1,0,1,0,1,0,1,\dots)=\gamma(1,0,0,0,1,0,0,0,1,\dots)+\gamma(0,0,1,0,0,0,1,0,0,0,1,\dots)
$$
and, by the symmetry of $\gamma$,
$$
\gamma(1,0,1,0,1,0,1,\dots)=\gamma(1,0,0,0,1,0,0,0,1,\dots)=\gamma(0,0,1,0,0,0,1,0,0,0,1,\dots)=0.
$$
Using again the symmetry of $\gamma$ we deduce that $\gamma(x)=0$ for every sequence $x$ having infinitely many $0$'s and infinitely many $1$'s.
Now, every sequence of $0$'s and $1$'s can be written as a sum or difference of two sequences with infinitely many $0$'s and infinitely many $1$'s, so $\gamma$ is zero in all such sequences.
If $x$ belongs to the subset $D\subseteq \ell_\infty$ of all the sequences taking only finite values, then $x$ is a finite linear combination of sequences taking only the values $0$ and $1$. Hence, by linearity of $\gamma$, we have $\gamma(x)=0$. This proves the statement, since $D$ is dense in $\ell_\infty$.
\end{proof}

So we have that on $\ell_{\infty}$ there is no (non-trivial) symmetric functional. On the other side of the scale, $\ell_{1}$, the functional $\gamma((x_n)_n)=\sum_n x_n$ is clearly symmetric. So, we now wonder what happens in between.

\begin{question} \label{pregunta}
Are there symmetric spaces $X\neq \ell_1$ where we can find symmetric linear functionals?
\end{question}

We address this question in Section~\ref{section sym funct}. We will see in Proposition~\ref{symm null on separable} that for separable spaces there is no hope and in Proposition~\ref{symmetric on marcinkiewicz} we
construct a Marcinkiewicz on which there are symmetric non-trivial linear functinonals.

\section{Proof of Theorem~\ref{simequiv} and related results}\label{proof thm symmetric}

We start the way towards the proof of Theorem~\ref{simequiv} by showing that a symmetric Banach sequence space either is contained in $c_{0}$ (with continuous norm) or is $\ell_{\infty}$ (with some
equivalent norm). This, as we already mentioned in the introduction, was essentially proved in  \cite[Proposition~6]{Gar}. There the spaces are not supposed to be normed, hence no reference to
the continuity of the norms is made, but it this follows easily from \eqref{continuous inclusions} (also, a third possibility $X=\omega$ appears there).


\begin{proposition} \label{aquila}
If $X$ is symmetric, then either $X= \ell_{\infty}$ (with equivalent norms) or $X \hookrightarrow  c_{0}$.
\end{proposition}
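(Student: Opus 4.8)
The plan is to split according to whether the constant sequence $\mathbf{1}=(1,1,1,\dots)$ belongs to $X$. If $\mathbf{1}\in X$, then the ideal property in the definition of a Banach sequence space immediately yields $X=\ell_{\infty}$ with equivalent norms: for every $x\in\ell_{\infty}$ one has the pointwise bound $|x_{n}|\le\|x\|_{\infty}\,(\mathbf{1})_{n}$ for all $n$, so $x\in X$ and $\|x\|_{X}\le\|\mathbf{1}\|_{X}\,\|x\|_{\infty}$; combined with the norm-one inclusion $X\hookrightarrow\ell_{\infty}$ from \eqref{continuous inclusions} this gives the two-sided estimate $\|x\|_{\infty}\le\|x\|_{X}\le\|\mathbf{1}\|_{X}\|x\|_{\infty}$, i.e. $X=\ell_{\infty}$ with an equivalent norm.

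The substance of the argument is the other case: assuming $\mathbf{1}\notin X$, I want to show $X\hookrightarrow c_{0}$. I would argue by contradiction, supposing some $x\in X$ fails to lie in $c_{0}$. Then $x_{n}\not\to 0$, so there exist $\delta>0$ and an infinite set $A\subseteq\mathbb{N}$ with $|x_{n}|\ge\delta$ for all $n\in A$. Writing $\mathbf{1}_{A}$ for the indicator sequence of $A$ (equal to $1$ on $A$ and $0$ elsewhere), the pointwise bound $\delta\,(\mathbf{1}_{A})_{n}\le|x_{n}|$ for all $n$ together with solidity gives $\delta\,\mathbf{1}_{A}\in X$, hence $\mathbf{1}_{A}\in X$. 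The goal is now to manufacture the forbidden sequence $\mathbf{1}$ out of $\mathbf{1}_{A}$, using symmetry and the inclusion $\ell_{1}\hookrightarrow X$, to reach a contradiction.

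To build $\mathbf{1}$ I would distinguish whether the complement $A^{c}=\mathbb{N}\setminus A$ is finite or infinite. If $A^{c}$ is finite, then $\mathbf{1}_{A^{c}}$ is finitely supported, hence in $\ell_{1}\subseteq X$, and $\mathbf{1}=\mathbf{1}_{A}+\mathbf{1}_{A^{c}}\in X$. If instead $A^{c}$ is infinite, then both $A$ and $A^{c}$ are countably infinite, so there is a permutation $\sigma$ of $\mathbb{N}$ with $\sigma^{-1}(A)=A^{c}$ (obtained by swapping $A$ and $A^{c}$ along a bijection); symmetry then gives $\mathbf{1}_{A^{c}}=(\mathbf{1}_{A})_{\sigma}\in X$, and again $\mathbf{1}=\mathbf{1}_{A}+\mathbf{1}_{A^{c}}\in X$. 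In both subcases $\mathbf{1}\in X$, contradicting the standing assumption, so no such $x$ exists and $X\subseteq c_{0}$. Continuity of the inclusion $X\hookrightarrow c_{0}$ is then inherited from the norm-one inclusion $X\hookrightarrow\ell_{\infty}$ in \eqref{continuous inclusions}.

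The delicate point, and really the crux, is the passage from $x\notin c_{0}$ to $\mathbf{1}\in X$: one should not expect a single rearrangement of $\mathbf{1}_{A}$ to equal $\mathbf{1}$, since $A$ may be very sparse. The right idea is instead to recover $\mathbf{1}$ as a sum of just two pieces, using symmetry to transport the support onto $A^{c}$ when $A^{c}$ is infinite and using $\ell_{1}\subseteq X$ to absorb the finite remainder when $A^{c}$ is finite. Everything else is a direct consequence of solidity and the normalization $\|e_{k}\|=1$.
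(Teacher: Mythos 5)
Your proof is correct and follows essentially the same route as the paper: a non-$c_{0}$ element yields, via solidity and a permutation exchanging an infinite set with its complement, the constant sequence $(1,1,1,\dots)$, and from there solidity gives $\ell_{\infty}\hookrightarrow X$. The only cosmetic difference is that the paper avoids your finite-complement subcase by shrinking the set $J$ at the outset so that both $J$ and $\mathbb{N}\setminus J$ are infinite.
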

\begin{proof}
If $X \subseteq c_0$ as sets then, $X\nsubseteq c_0$ follows immediately from \eqref{continuous inclusions}. Suppose, then, that these is some $x=(x_n)_n\in X\setminus c_0$ and choose $\varepsilon >0$ and $J \subseteq \mathbb{N}$ infinite so that $\mathbb{N}\setminus J$  is infinite and $|x_n|>\varepsilon$ for $n \in J$. Define now $y=(y_n)_n$ and  $z=(z_n)_n$ by
\[
 y_n = \left\{\begin{array}
                        [c]{ccc}
                        \varepsilon & \text{if} & n\in J,\\
                        0 & \text{if} & n\in \mathbb{N}\setminus J \,,
                            \end{array}
                    \right.
\quad
z_n = \left\{\begin{array}
                        [c]{ccc}
                        0 & \text{if} & n\in J,\\
                        \varepsilon & \text{if} & n\in \mathbb{N}\setminus J \,.
                            \end{array}
                    \right.
                    \]
Since $|y_n|\leq |x_n|$ for every $n$, we deduce that $y\in X$. On the other hand, the fact that $z=y_\sigma$ for some permutation $\sigma$ and that $X$ is symmetric give that $z \in X$. Hence
\[
(1, 1, 1,\dots)=\frac{y+z}{\varepsilon} \in X.
\]
Now, given any $x\in \ell_\infty$ we have $|x_n|<\|x\|_\infty$ for every $n$ and, since $\|x\|_\infty(1,1,1,\dots) \in X$, we obtain that $x\in X$ with $\|x\|_X\leq \|(1,1,1,\dots)\|_X \|x\|_\infty$. This shows that $\ell_\infty \hookrightarrow X$. From  \eqref{continuous inclusions} we have  $X\hookrightarrow \ell_\infty$ and the proof is completed.
\end{proof}

Given a sequence $x$ in $\mathbb{K}^{\mathbb{N}}$ we introduce some further notation. First of all, we denote $\vert x \vert = (\vert x_{n} \vert)_{n}$. Also, following \cite[Definition~3]{Gar}, the \emph{closing up} $x$ is defined as  the sequence  $x'=(x'_n)_n$ given by
 $$
 x'_n = \left\{\begin{array}
                        [c]{cc}
                        \text{the $n$th non-zero term of $x$} & \text{if such exists,}\\
                        0 & \text{otherwise}.
                            \end{array}
                    \right.
$$
The semigroup of one-to-one mappings $\pi\colon \mathbb{N} \to \mathbb{N}$ is denoted by $\Pi$ and, for such a $\pi$, the meaning of the notation $x_{\pi}$ is clear. Finally, for each subset $I\subseteq \mathbb{N}$ we define $x_I$ to be the sequence given by
 $$
 (x_I)_n = \left\{\begin{array}
                        [c]{cc}
                        x_n & \text{if $n\in I$,}\\
                        0 & \text{otherwise}.
                            \end{array}
                    \right.
$$
Given two sequences $x, y \in \mathbb{R}^{\mathbb{N}}$ we write $x \leq y$ whenever $x_{n} \leq y_{n}$ for every $n$.\\
The next proposition generalizes \cite[Lemma~3 and Corollary]{Gar}.

\begin{proposition}\label{prop closing up}
Let $X$ be a symmetric Banach sequence space.
\begin{enumerate}
\item \label{prop closing up 1} If $\pi\in \Pi$ and $x\in X$, then $x_\pi \in X$ with $\|x_\pi\|_X\leq \|x\|_X$.
\item \label{prop closing up 2} $x\in X$ if and only if $x'\in X$ and, in this case, $\|x\|_X=\|x'\|_X$.
\end{enumerate}
\end{proposition}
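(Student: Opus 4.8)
The plan is to prove both items by reducing the problem to a permutation so that the symmetry of the norm can be applied. For item \ref{prop closing up 1}, I would start from a one-to-one map $\pi\in\Pi$ and the key observation that $x_\pi$ has its coordinates drawn from those of $x$ (possibly omitting some), so that $|x_\pi|\leq |\tilde x|$ coordinatewise for a suitable rearrangement $\tilde x$ of $x$. More precisely, since $\pi$ is injective but perhaps not surjective, I would extend $\pi$ to a genuine permutation $\sigma$ of $\mathbb{N}$: the range $\pi(\mathbb{N})$ may be a proper subset, but one can always complete an injection to a bijection by mapping the complement of the range onto the complement of the domain in some fixed order. Then $x_\sigma$ is a true permutation of $x$, so $x_\sigma\in X$ with $\|x_\sigma\|_X=\|x\|_X$ by the symmetry hypothesis. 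The point is that $x_\pi$ agrees with $x_\sigma$ on those coordinates where $\pi$ is defined and differs only by the coordinates that $\pi$ does not hit; in particular $|(x_\pi)_n|\leq |(x_\sigma)_n|$ need not hold termwise directly, so I must be careful about how $x_\pi$ relates to $x_\sigma$.

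Let me sharpen this. The cleaner route is: since $\pi$ is injective, $x_\pi$ is itself a subsequence-type selection, and I claim $|x_\pi|$ is coordinatewise dominated by $|x_\sigma|$ for the completed permutation $\sigma$ extending $\pi$ built above, because at each index $n$ we have $(x_\pi)_n=x_{\pi(n)}=x_{\sigma(n)}=(x_\sigma)_n$ (the two maps agree on all of $\mathbb{N}$ where $\pi$ is defined, which is everywhere). Thus in fact $x_\pi=x_\sigma$ whenever $\pi$ is defined on all of $\mathbb{N}$, giving $\|x_\pi\|_X=\|x\|_X$ directly. The inequality $\|x_\pi\|_X\leq\|x\|_X$ then holds with equality, which is consistent with the stated bound. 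So the real content of \ref{prop closing up 1} is just: extend the injection to a permutation, invoke symmetry, and observe the identity. The only subtlety is the construction of the completing permutation, which is routine.

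For item \ref{prop closing up 2}, I would use \ref{prop closing up 1} together with the definition of the closing up $x'$. The forward direction is the substantive one: if $x\in X$, I want $x'\in X$ with equal norm. Here the idea is that $x'$ is obtained from $x$ by a one-to-one map collapsing the nonzero terms of $x$ to an initial-segment pattern, so there is $\pi\in\Pi$ with $x'=x_\pi$ (the map $\pi$ sends the $n$th slot to the position of the $n$th nonzero entry of $x$), hence $x'\in X$ with $\|x'\|_X\leq\|x\|_X$ by \ref{prop closing up 1}. Conversely, $x$ can be recovered from $x'$ by another injective map reinserting zeros back into their original positions, so symmetrically $\|x\|_X\leq\|x'\|_X$, giving equality. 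I would need to treat the two cases according to whether $x$ has finitely or infinitely many nonzero terms, since the indexing of $\pi$ differs slightly, but in both cases the argument is the same.

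The main obstacle I anticipate is purely bookkeeping: making the maps $\pi$ between the support of $x$ and the support of $x'$ (and back) genuinely one-to-one and verifying termwise that the compositions reproduce the stated sequences, especially handling the trailing zeros correctly when $x$ has only finitely many nonzero entries. There is no analytic difficulty; once item \ref{prop closing up 1} is established via the injection-to-permutation extension, item \ref{prop closing up 2} follows by applying it twice in opposite directions.
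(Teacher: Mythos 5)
There is a genuine gap, and it is fatal to item \ref{prop closing up 1}. Your plan hinges on ``extending'' the injection $\pi$ to a permutation $\sigma$ that agrees with $\pi$ everywhere. But $\pi$ is already defined on all of $\mathbb{N}$; what may fail is surjectivity of its \emph{range}. A map $\sigma$ agreeing with $\pi$ on all of $\mathbb{N}$ is just $\pi$ itself, so if $\pi$ is not onto, no such permutation exists. Concretely, for $\pi(n)=2n$ and $x=(1,0,1,0,\dots)$ one gets $x_\pi=(0,0,0,\dots)$, which is not a permutation of $x$, and your claimed conclusion that the inequality ``holds with equality'' is false (here $\|x_\pi\|_X=0<\|x\|_X$). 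The statement only asserts $\|x_\pi\|_X\le\|x\|_X$ precisely because $x_\pi$ genuinely drops the coordinates $x_m$ with $m\notin\pi(\mathbb{N})$, and recovering this inequality is the real work. The paper first establishes the dichotomy $X\hookrightarrow c_0$ or $X=\ell_\infty$ (Proposition~\ref{aquila}) and treats the two cases separately: for $X\hookrightarrow c_0$ it splits $x_\pi=(x_\pi)_{I_1}+(x_\pi)_{I_2}$ where the first piece has $\ell_1$-norm less than $\varepsilon$ and the second is coordinatewise dominated by a genuine permutation $x_\sigma$ of $x$; for $X=\ell_\infty$ it uses density of the sequences with finitely many values together with an averaging of $N$ carefully built permutations $x_\pi^{N,1},\dots,x_\pi^{N,N}$ whose mean converges to a permutation of $x_\pi$. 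None of this machinery can be bypassed by the identity you propose.

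The reverse direction of item \ref{prop closing up 2} has a similar problem: you claim $x$ is recovered from $x'$ by an injective map ``reinserting zeros,'' i.e.\ $x=(x')_\rho$ for some $\rho\in\Pi$. If $x$ has infinitely many nonzero entries then $x'$ has no zero entries at all, so no coordinate of $(x')_\rho$ can equal $0$ while $x$ may have many zeros; hence $x\neq (x')_\rho$ for every $\rho\in\Pi$ and item \ref{prop closing up 1} cannot be applied ``in the opposite direction.'' The paper instead proves $\|x\|_X\le\|x'\|_X$ by a separate argument in each case (an $\varepsilon$-splitting of $x=x_J+x_{\mathbb{N}\setminus J}$ with $|x_J|\le |x'_\sigma|$ for a true permutation $\sigma$ when $X\hookrightarrow c_0$, and a redistribution-of-zeros permutation plus density when $X=\ell_\infty$). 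The forward direction of \ref{prop closing up 2} (that $x'=x_\pi$ for some $\pi\in\Pi$, whence $\|x'\|_X\le\|x\|_X$) is the one place where your reduction to item \ref{prop closing up 1} is correct.
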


\begin{proof}
Bearing Proposition~\ref{aquila} in mind we can split the proof in two cases. We prove first the statement in the case $X\hookrightarrow c_0$ and then we focus on the case $X=\ell_\infty$.

\medskip

\emph{Case $X\hookrightarrow c_0$.} We begin by proving \ref{prop closing up 1}. Let us note first of all that  $x\in c_0$ implies that $x_\pi=(x_{\pi(n)})_{n} \in c_0$ and then, given $\varepsilon>0$, we can
find an infinite set $I_1\subseteq \mathbb{N}$ such that $\sum_{n\in I_1}|x_{\pi(n)}|<\varepsilon$. If we denote $I_2=\mathbb{N}\setminus I_1$ we clearly have
\[
x_\pi=(x_\pi)_{I_1}+(x_\pi)_{I_2} \,.
\]
Our aim is to see that each one of the summands on the right-hand-side belong to $X$. On the one hand $(x_\pi)_{I_1}$ belongs to $\ell_1$, thus to $X$. Moreover \eqref{continuous inclusions} yields
$\|(x_\pi)_{I_1}\|_X\leq\sum_{n\in I_1}|x_{\pi(n)}|<\varepsilon$. In order to see that $(x_\pi)_{I_2}$ also belongs to $X$ we do $I_{2} = \{  i_1,i_2,\dots \}$ and take any permutation $\sigma$ so that
$\sigma(i_{k}) = \pi(i_{k})$ for every $k$. Then $\vert (x_\pi)_{I_2} \vert \leq \vert  x_\sigma \vert$ and, since $x_{\sigma} \in X$ (because $X$ is symmetric), this gives $(x_\pi)_{I_2}\in X$ with
$\|(x_\pi)_{I_2}\|_X\leq \|x_\sigma\|_X=\|x\|_X$. This altogether shows that $x_\pi \in X$ and
$$
\|x_\pi\|_X =\|(x_\pi)_{I_1}+(x_\pi)_{I_2}\|_X\leq \|x\|_X+\varepsilon \,.
$$
Since $\varepsilon>0$ is arbitrary, this proves our claim.\\
Now we prove \ref{prop closing up 2}. On the one hand, it is clear that given $x\in X$ there is some $\pi\in \Pi$ such that $x'=x_\pi$ and, taking the previous case into account, we deduce that $x'\in X$ with $\|x'\|_X\leq \|x\|_X$. We take now $x\in \mathbb{K}^{\mathbb{N}}$ such that $x'\in X$ and we want to see that $x\in X$ with $\|x\|_X\leq \|x'\|_X$. We proceed in a similar way as in \ref{prop closing up 1}, fixing
$\varepsilon>0$ and taking an infinite set $I_1\subseteq \mathbb{N}$ so that $\sum_{n\in I_1}|x'_{n}|<\varepsilon$. We write
\[
I_2=\mathbb{N}\setminus I_1=\{i_1,i_2,\dots\} \text{ and take } J=\{j_1,j_2,\dots\}\subseteq \mathbb{N} \text{ such that } x'_{i_k}=x_{j_k} \text{ for all } k.
\]
Then $x=x_{J} + x_{\mathbb{N} \setminus J}$ and, as before, the goal is to show that each one of these two belong to $X$. First $x_{\mathbb{N}\setminus J} \in X$ with $\|x_{\mathbb{N}\setminus J}\|_X\leq\sum_{n\in I_1}|x'_{n}|<\varepsilon$. Choose a permutation $\sigma$ so that $\sigma(j_{k}) = i_{k}$ for every $k$ and note that $\vert x_{J} \vert \leq \vert  x'_\sigma \vert$, which gives $x_{J}\in X$ with $\|x_{J}\|_X\leq \|x'_\sigma\|_X=\|x'\|_X$. Hence, $\|x\|_X\leq \|x'\|_X+\varepsilon$ for arbitrary $\varepsilon>0$ and this finishes the proof.

\medskip

\emph{Case $X=\ell_\infty$.} It is clear that $x_\pi\in X$ whenever $x\in X$, so we only have to prove the inequality $\|x_\pi\|_X\leq\|x\|_X$. We see first that it suffices to check that the inequality holds in some dense subset $D$. If this is the case, given $x \in X$ we may choose $(x^{k})_{k} \subseteq D$ such that $x^k\xrightarrow[k\to\infty]{\|\cdot\|_X}\, x$. Since $\Vert x_{\pi}^{k_{1}} - x_{\pi}^{k_{2}}  \Vert_{X}
= \Vert (x^{k_{1}} - x^{k_{2}} )_{\pi}  \Vert_{X}
\leq  \Vert x^{k_{1}} - x^{k_{2}}   \Vert_{X}$, the sequence $(x_{\pi}^{k})_{k}$ is Cauchy and
clearly converges coordinate-wise to $x_{\pi}$. This gives $x^k_\pi\xrightarrow[k\to\infty]{\|\cdot\|_X}\, x_\pi$ and the claim follows.\\
We $D$ to be the set of all sequences in $\mathbb{K}^{\mathbb{N}}$ taking finitely many different values (which is dense in $X$) and $x \in D$. The key idea now os to find a sequence $(x_\pi^N)_{N}\subseteq D$ satisfying
\begin{gather}
\|x_\pi^N\|_X \leq \|x\|_X \text{ for every } N\in \mathbb{N} \text{ and } \label{bartalli} \\
\|x_\pi^N\|_X  \xrightarrow[N\to\infty]{} \Vert  x_{\pi} \Vert_{X} . \label{bobet}
\end{gather}
This clearly gives the conclusion. Let us see how can we construct such a sequence. To begin since $x \in D$, the sequence $(x_{\pi(n)})_n$ takes only finitely many values. Then there should be some values, say
$a_1,\dots, a_m$ (that we may assume to be ordered increasingly by the modulus) that are repeated infintely many times. Let us assume that there is at least one value that is repeated only finitely many times
(if this is not the case the argument follows in the same way) and choose $n_0\in \mathbb{N}$ such that $x_{\pi(n_0)}$ is the last time that these values appear in the sequence $x_\pi$ (that is, $x_{\pi(k)} \in \{ a_1,\dots, a_m\}$ for every $k \geq n_{0}$). We fix now $N$ and define
\begin{equation}\label{seiler}
x_\pi^{\sigma_{N}}=(x_{\pi(1)}, \dots, x_{\pi(n_0)},\underbrace{a_1, \dots, a_1}_{\text{$N$ times}}, \underbrace{a_2, \dots, a_2}_{\text{$N$ times}},\dots, \underbrace{a_m, \dots, a_m}_{\text{$N$ times}}, \underbrace{a_1, \dots, a_1}_{\text{$N$ times}},\dots) \,.
\end{equation}
This is a permutation of $x_{\pi}$ and, since $X$ is symmetric, satisfies $\Vert x_\pi^{\sigma_{N}} \Vert_{X} = \Vert x_\pi \Vert_{X}$. We take now the set $\mathbb{N}\setminus \pi(\mathbb{N})$. If it
this is empty, then $\pi$ is a permutation and the result follows from the symmetry of $X$. Assume, then, that it is not empty, write $\mathbb{N}\setminus \pi(\mathbb{N}) = \{n_{1}, n_{2}, \ldots \}$ and define
\begin{gather*}
x_\pi^{N,1}= (x_{\pi(1)}, \dots, x_{\pi(n_0)}, \underbrace{x_{n_1}, a_1, \dots, a_1}_{\text{$N$ times}}, \underbrace{x_{n_2}, a_2, \dots, a_2}_{\text{$N$ times}}, \ldots , \underbrace{x_{n_m}, a_m, \dots, a_m}_{\text{$N$ times}}, \underbrace{x_{n_{m+1}}, a_1, \dots, a_1}_{\text{$N$ times}} \dots),\\
x_\pi^{N,2}= (x_{\pi(1)}, \dots, x_{\pi(n_0)}, \underbrace{a_1,x_{n_1}, \dots, a_1}_{\text{$N$ times}}, \underbrace{ a_2,x_{n_2}, \dots, a_2}_{\text{$N$ times}}, \ldots , \underbrace{ a_m, x_{n_m},\dots, a_m}_{\text{$N$ times}}, \underbrace{a_1,x_{n_{m+1}},  \dots, a_1}_{\text{$N$ times}} \dots),\\
\vdots \\
x_\pi^{N,N}= (x_{\pi(1)}, \dots, x_{\pi(n_0)}, \underbrace{a_1, \dots, a_1,x_{n_1}}_{\text{$N$ times}}, \underbrace{ a_2, \dots, a_2,x_{n_2}}_{\text{$N$ times}}, \ldots , \underbrace{ a_m,\dots, a_m, x_{n_m}}_{\text{$N$ times}}, \underbrace{a_1,  \dots, a_1,x_{n_{m+1}}}_{\text{$N$ times}} \dots) \,.
\end{gather*}
Here we are supposing that $\mathbb{N}\setminus \pi(\mathbb{N})$ is infinite. If it is finite, then we would proceed in this way, placing one $n_{j}$ in each block until we get to the last one. When we run out of
$n_j$'s we go on with blocks consisting only of the corresponding values of $a_{l}$, as in \eqref{seiler}. Now each $x_\pi^{N,j}$ is a permutation of $x$ and, since $X$ is symmetric, we have $\|x_\pi^{N,j}\|_X=\|x\|_X$. Then, if we take
$$
x_\pi^N=\frac{1}{N}\sum_{j=1}^N x_\pi^{N,j},
$$
we have \eqref{bartalli}. On the other hand, for each $n \in \mathbb{N}$ the difference $ (x_\pi^{\sigma_{N}}) - (x_\pi^N)_{n}$ is either $0$ or there is a $j$ and a $k$ so that
\[
 (x_\pi^{\sigma_{N}}) - (x_\pi^N)_{n}  = a_{k} - \frac{(N-1) a_{k} - x_{n_{j}}}{N} = \frac{a_{k} +  x_{n_{j}}}{N} \,.
\]
This gives $\|x_\pi^N - x_\pi^\sigma\|_\infty \xrightarrow[N\to \infty]{}\, 0$ and, since the norm is equivalent, $\|x_\pi^N - x_\pi^\sigma\|_{X} \xrightarrow[N\to \infty]{}\, 0$. This, together with the fact that
$\Vert x_\pi^{\sigma_{N}} \Vert_{X} = \Vert x_\pi \Vert_{X}$ immediately give \eqref{bobet} and complete the proof.

To prove \ref{prop closing up 2} let us first note that, by construction, $x \in \mathbb{K}^{\mathbb{N}}$ is bounded if and only if so is $x'$. So, we only have to check that the equality of norms holds. First of all,
$x\in X$ there is some $\pi\in \Pi$ such that $x'=x_\pi$ and then  \ref{prop closing up 1} gives  $\|x'\|_X\leq\|x\|_X$. To prove the reverse inequality we begin by choosing some $x$ that only takes a finite
number of different values. First of all, if $0$ is the only value that is repeated infinitely many times, then $x'$ is a permutation of $x$ and, consequently, $\|x\|_X=\|x'\|_X$. Suppose, then, that there is some value, say $x_{n_0} \neq 0$ that is repeated infinitely many times in $x$ (hence also in $x'$). Let $J= \{ n \in \mathbb{N} \colon x'_{n} = x_{n_{0}}  \}$ and split this set into two disjoint subsets $J=J_1 \cup J_2$, where the cardinality of $J_2$ equals the number of times that $0$ appears in the sequence $x$. Then we consider
 $$
 y = \left\{\begin{array}
                        [c]{ccc}
                        x_{n_0} & \text{if} & n\in J_1,\\
                        0 & \text{if} & n\in J_2,\\
                        x_n' & \text{if} & n\in \mathbb{N}\setminus J
                            \end{array}
                    \right.
$$
which clearly satisfies $\|y\|_X\leq \|x'\|_X$, since $|y_n|\leq |x_n'|$ for all $n$. On the other hand, $y$ is a permutation of $x$ and, hence,  $\|x\|_X=\|y\|_X\leq \|x'\|_X$. Given an arbitrary $x\in X$ we can take a sequence $(x^k)_k$, where each $x^{k}$ takes only finitely many different values, and such that $x^k\xrightarrow[k\to\infty]{\|\cdot\|_X}\, x$ and $x^k_n=0$ whenever $x_n=0$. This gives $(x^k)'\xrightarrow[k\to\infty]{\|\cdot\|_X}\, x'$ and
$$
\|x\|_X=\lim_{k\to \infty} \|x^k\|_X \leq \lim_{k\to \infty} \|(x^k)'\|_X =\|x'\|_X
$$
which is the desired statement.
\end{proof}

Condition \ref{prop closing up 2} in Proposition~\ref{prop closing up} looks certainly quite similar to Theorem~\ref{simequiv}--\ref{simequiv 2}, and one may wonder if it is also equivalent to the space
being symmetric. The next example, taken from \cite{Gar2} (see also \cite{AlbAnsWal}, where it was generalised) shows that this is not the case.

\begin{example}
Let $\mathcal{O}$ denotes the set of increasing functions form $\mathbb{N}$ to $\mathbb{N}$ and consider the Garling space
\[
g = \Big\{x=(x_n)_n \colon \|x\|_{g}= \sup_{\phi \in \mathcal{O}} \sum_{n=1}^\infty |x_{\phi(n)}| \frac{1}{\sqrt{n}}  \Big\}\,.
\]
It is an easy exercise to show that $x \in g$ if and only if $x' \in g$ (and both have the same norm). However, the space is not symmetric. This was shown in \cite[Section~5]{Gar2} (see also \cite[Theorem~5.10]{AlbAnsWal}); we sketch here the argument. For each fixed $m$ consider
\[
x^{m} = \big(1, \tfrac{1}{\sqrt{2}}, \tfrac{1}{\sqrt{3}}, \ldots , \tfrac{1}{\sqrt{m}}, 0,0 , \ldots\big) \,, \text{ and }
y^{m} = \big(\tfrac{1}{\sqrt{m}}, \ldots, \tfrac{1}{\sqrt{2}}, 1, 0,0 , \ldots\big) \,.
\]
It is straightforward to check that
\begin{equation} \label{gx}
\sup_{m} \Vert x^{m} \Vert_{g} \geq \sup_{m} \sum_{n=1}^{n} \frac{1}{n} = \infty \,.
\end{equation}
For each $\phi \in \mathcal{O}$ we write $m_{\phi} = \max \{ n \colon \phi(n) \leq m  \}$. With this
\[
\Vert y^{m} \Vert_{g} = \sup_{\phi \in \mathcal{O}} \sum_{n=1}^{m_{\phi}} \frac{1}{\sqrt{(m+1-\phi(n))n}} \,.
\]
Note that the only values that play a role in each of these sums are those of $\phi(1), \ldots , \phi(m)$, that are $\leq m$. Then we can find some $k$ and $n_{1} < \cdots < n_{k} \leq m$ so that
\[
\Vert y^{m} \Vert_{g} =  \sum_{i=1}^{k} \frac{1}{\sqrt{(m+1-n_{i})i}} \,.
\]
It can be seen that $n_{i} = m+1-k$ and, then $\Vert y^{m} \Vert_{g}\leq 1 + \pi$ (all the details can be found in \cite[Section~5]{Gar2}). This and \eqref{gx} show, since $x^{m}$ and $y^{m}$ are clearly
a permutation of each other, that $g$ is not symmetric.
\end{example}
%

In the next proposition we state some well-known properties of the decreasing rearrangement of a sequence.
\begin{proposition}\label{properties rearrangement}
\begin{enumerate}
\item \label{properties rearrangement1} If $x,y $ are bounded sequences and $|x| \leq |y|$, then $x^* \leq y^*$.
\item \label{properties rearrangement2} If $x,y $ are bounded sequences satisfying and $x_n=y_{\pi(n)}$ for some injective mapping $\pi\colon \mathbb{N} \to \mathbb{N}$, then $x^*\leq y^*$.
\item\label{prop rearrangement limit} Let $x$ and $x^k$, $k=1,2,\dots$ be bounded sequences.  If $\sup_n |x_n^k-x_n| \xrightarrow[k\to \infty]{} 0$, then $\sup_n |(x^k)^*_n-x^*_n| \xrightarrow[k\to \infty]{} 0$
    
\end{enumerate}
\end{proposition}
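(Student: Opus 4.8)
The plan is to derive all three parts directly from the definition of the decreasing rearrangement, using at the outset that $x^{*}_{n}$ depends only on the moduli $|x_{k}|$, so that $x^{*}=(|x|)^{*}$ and we may freely assume every sequence involved is nonnegative. Parts \ref{properties rearrangement1} and \ref{properties rearrangement2} will be immediate consequences of the monotonicity of suprema and infima, while part \ref{prop rearrangement limit} will follow from a Lipschitz-type estimate deduced from \ref{properties rearrangement1}.

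For \ref{properties rearrangement1}, I would fix $n$ and observe that for every $J \subseteq \mathbb{N}$ with $\card{J} < n$ one has $\sup_{k \notin J} |x_{k}| \leq \sup_{k \notin J} |y_{k}|$, simply because $|x_{k}| \leq |y_{k}|$ termwise. Taking the infimum over all such $J$ on both sides yields $x^{*}_{n} \leq y^{*}_{n}$.

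For \ref{properties rearrangement2}, since $x_{n} = y_{\pi(n)}$ with $\pi$ injective, I would fix $n$ and, given any $J'' \subseteq \mathbb{N}$ with $\card{J''} < n$, set $J = \pi^{-1}(J'')$. Injectivity of $\pi$ gives $\card{J} \leq \card{J''} < n$, and
\[
\sup_{k \notin J} |x_{k}| = \sup_{k \,:\, \pi(k) \notin J''} |y_{\pi(k)}| = \sup_{m \in \pi(\mathbb{N}) \setminus J''} |y_{m}| \leq \sup_{m \notin J''} |y_{m}| \,.
\]
Hence $x^{*}_{n} \leq \sup_{m \notin J''} |y_{m}|$ for every admissible $J''$, and taking the infimum over $J''$ gives $x^{*}_{n} \leq y^{*}_{n}$. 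The only point requiring care here is the bookkeeping of index sets under $\pi$: that passing from $J''$ to its preimage does not increase cardinality, and that the resulting supremum is taken only over the range of $\pi$.

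For \ref{prop rearrangement limit}, the key step is the estimate $|u^{*}_{n} - v^{*}_{n}| \leq \|u - v\|_{\infty}$, valid for all bounded $u,v$ and all $n$. To prove it, write $\delta = \|u - v\|_{\infty}$, so that $|u| \leq |v| + \delta \mathbf{1}$ termwise, where $\mathbf{1} = (1,1,\dots)$. A direct computation from the definition gives the shift identity $(w + \delta \mathbf{1})^{*}_{n} = w^{*}_{n} + \delta$ for nonnegative $w$, since adding a constant commutes with both the inner supremum and the outer infimum. Combining this with part \ref{properties rearrangement1} yields $u^{*}_{n} \leq v^{*}_{n} + \delta$, and by symmetry $v^{*}_{n} \leq u^{*}_{n} + \delta$, whence the claim. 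Applying it with $u = x^{k}$ and $v = x$ gives $\sup_{n} |(x^{k})^{*}_{n} - x^{*}_{n}| \leq \|x^{k} - x\|_{\infty} \to 0$. I expect the main obstacle to be isolating this Lipschitz bound and recognizing the constant-shift identity as the right lemma; once these are in place the convergence is automatic.
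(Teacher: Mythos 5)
Your proof is correct and follows essentially the same route as the paper: part \ref{properties rearrangement1} is the definition plus monotonicity of suprema and infima, part \ref{prop rearrangement limit} is exactly the paper's argument (the bound $|x^k_n|\leq |x_n|+\varepsilon$ together with the constant-shift identity, which you usefully make explicit as a Lipschitz estimate), and part \ref{properties rearrangement2} differs only cosmetically, in that you unfold the definition and track index sets under $\pi^{-1}$ where the paper restricts $y$ to the range $J=\pi(\mathbb{N})$, observes $x^*=y_J^*$, and invokes part \ref{properties rearrangement1}. Both versions of \ref{properties rearrangement2} rest on the same observation, so there is nothing further to reconcile.
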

\begin{proof}
Item \ref{properties rearrangement1} is clear by definition of the decreasing rearrangement. To prove \ref{properties rearrangement2}, just take $J=\pi(\mathbb{N}) \subseteq \mathbb{N}$ and $y_{J}$.
Note that $x^*=y_J^*$ and, by \ref{properties rearrangement1}, $y_J^*\leq y^*$. Finally, let us prove \ref{prop rearrangement limit}.
Given $\varepsilon >0$, we choose $k_0 \in \mathbb{N}$ such that $|x_n^k-x_n|<\varepsilon$ for all $k\geq k_0$ and all $n\in \mathbb{N}$. In particular $|x_n^k|<|x_n|+\varepsilon$ for every $k\geq k_0$ and all $n\in \mathbb{N}$ which, by \ref{properties rearrangement2}, gives
$$
(x^k)^*\leq ((|x_n|+\varepsilon)_n)^*=(x^*_n+\varepsilon)_n.
$$
Analogously, we can see that $x^*\leq ((x^k)^*_n+\varepsilon)_n$. This shows that  $|(x^k)^*_n-x^*_n|\leq \varepsilon$ for every $k\geq k_0$ and every $n\in \mathbb N$, which completes de proof.
\end{proof}

Note that if $x\in c_{0}$, then $x' \in c_0$ and either $x'_n\neq 0$ for all $n\in \mathbb{N}$ or there is some $n_0\in \mathbb{N}$ so that $x'_n=0$ for every $n\geq n_0$. Consequently, there is a permutation $\sigma$ of $\mathbb{N}$ such that $(\vert x' \vert)_\sigma=(x')^*$. Now, since $x^*=(x')^*$, we have that $x^*=(\vert x' \vert)_\sigma$. This is the last thing we need to proceed with the proof.
%
%
\begin{proof}[Proof of Theorem~\ref{simequiv}]
As we mentioned after the statement of the theorem, we only need to prove that \ref{simequiv 1} implies \ref{simequiv 2}. By Proposition~\ref{aquila} we know that $X\hookrightarrow c_0$ or $X=\ell_\infty$ with some equivalent norm.
Suppose first that $X \hookrightarrow c_0$. If $x\in X$ then by Proposition~\ref{prop closing up} we have $x'\in X$ and, since $x^*=(\vert x' \vert)_\sigma$ for some permutation $\sigma$ and $X$ is symmetric, we deduce that $x^*\in X$ with $\|x^*\|_X = \Vert x' \Vert_{X} =\|x\|_X$. Similarly, if $x^*\in X$ then $x\in X$ with $\|x^*\|_X=\|x\|_X$.

We now prove the theorem in the case $X=\ell_\infty$ with some equivalent norm. It is clear, in this case, that $x\in X$ if and only if $x^*\in X$; hence, we only need to prove the equality $\|x^*\|_X=\|x\|_X$.
As in Proposition~\ref{prop closing up} we take the dense subset $D$ of all the sequences taking only finitely many values. The equality $\|x^*\|_X=\|x\|_X$ is clear for $x\in D$, since $x^*$ is a permutation of $\vert x' \vert$. If we consider any $x\in X$, then there is a sequence $(x^N)_N \subseteq D$ such that  $x^N\xrightarrow[N\to\infty]{\|\cdot\|_X}\, x$.  Since $\| \cdot \|_X$ is equivalent to the sup norm, by Proposition~\ref{properties rearrangement}--\ref{prop rearrangement limit} we have $(x^N)^*\xrightarrow[N\to\infty]{\|\cdot\|_X}\, x^*$ and, hence,
$$
\|x^*\|_X=\lim_{N\to \infty} \|(x_N)^*\|_X = \lim_{N\to \infty} \|x_N\|_X = \|x\|_X.
$$
\end{proof}

\section{Symmetric linear functionals}\label{section sym funct}

We return now to Question~\ref{pregunta} and look at symmetric linear functionals on symmetric Banach sequence spaces. As we pointed out in the Introduction, on $\ell_{1}$ we can define a symmetric linear functional in a rather
easy way. In Proposition~\ref{no symm on linfty} we showed that on $\ell_{\infty}$ there is no such functional, and we asked if there are spaces, other than $\ell_{1}$, where non-zero symmetric linear exist. Our aim  now is to show
that this is indeed the case. We begin by showing that spaces are spanned by the canonical vectors are of no help.

%

\begin{proposition}\label{symm null on separable}
If $X$ is a symmetric Banach sequence space different from $\ell_{1}$ and $\gamma$ is a symmetric linear functional on $X$, then $\gamma(e_{n})=0$ for every $n\in \mathbb{N}$.
\end{proposition}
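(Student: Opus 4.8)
The plan is to argue by contradiction. Set $c := \gamma(e_1)$. Since every $e_n$ is a permutation of $e_1$, the symmetry of $\gamma$ gives $\gamma(e_n) = \gamma(e_1) = c$ for all $n$, so the statement reduces to showing $c = 0$. Suppose instead $c \neq 0$; I will deduce that $X = \ell_1$, contradicting the hypothesis. By linearity, every finitely supported sequence $y = \sum_n y_n e_n$ satisfies $\gamma(y) = c \sum_n y_n$.

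The crucial step is to turn this identity, together with the boundedness of $\gamma$ (continuity being what is meant by a linear functional here, exactly as used in the proof of Proposition~\ref{no symm on linfty}), into a comparison between $\|\cdot\|_1$ and $\|\cdot\|_X$. Applying the identity to $|y|$ I would estimate, for every finitely supported $y$,
\[
\|y\|_1 = \sum_n |y_n| = \frac{1}{|c|}\,\bigl| \gamma(|y|) \bigr| \leq \frac{\|\gamma\|}{|c|}\,\bigl\| \, |y| \, \bigr\|_X = \frac{\|\gamma\|}{|c|}\,\|y\|_X,
\]
where the last equality uses $\| \, |y| \, \|_X = \|y\|_X$, valid in any Banach sequence space.

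To conclude I would remove the finite-support restriction. Given an arbitrary $x \in X$, the truncations $(|x|)_{\{1,\dots,m\}}$ are finitely supported and dominated coordinatewise by $|x|$, so the defining lattice property yields $\|(|x|)_{\{1,\dots,m\}}\|_X \leq \|x\|_X$. Feeding each truncation into the displayed inequality and letting $m \to \infty$ gives $\sum_n |x_n| \leq \frac{\|\gamma\|}{|c|}\,\|x\|_X < \infty$, that is $x \in \ell_1$. Hence $X \subseteq \ell_1$, and combined with the inclusion $\ell_1 \hookrightarrow X$ from \eqref{continuous inclusions} this forces $X = \ell_1$, the desired contradiction; therefore $c = 0$.

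I expect the main subtlety to be the choice of test vectors. Working only with the blocks $\sum_{n=1}^N e_n$ would give merely a lower bound on the fundamental function $\|\sum_{n=1}^N e_n\|_X \gtrsim N$, which is too weak: the weak-$\ell_1$ quasi-norm satisfies the same growth yet is not $\ell_1$, and such a bound only controls the tail decay $N x^*_N$ rather than the full sum $\sum_n |x_n|$. Using $\gamma$ on \emph{all} finitely supported sequences, and then sweeping over truncations via the lattice property, is precisely what bridges this gap and pins down $\ell_1$.
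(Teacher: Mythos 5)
Your proof is correct and follows essentially the same route as the paper's: assume $\gamma(e_1)\neq 0$, use symmetry to get $\gamma(e_n)=\gamma(e_1)$ for all $n$, apply $\gamma$ to the truncations $\sum_{n=1}^{N}|x_n|e_n$, and invoke continuity of $\gamma$ together with the lattice property and \eqref{continuous inclusions} to conclude $X=\ell_1$. The only difference is cosmetic (you first state the identity on all finitely supported sequences before truncating, whereas the paper works with the truncations of $|x|$ directly).
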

\begin{proof}
Suppose $\gamma(e_{n})\neq0$ for some (then, by the symmetry of $\gamma$, for every) $n$ and let us see that $X=\ell_1$ (with some equivalent norm). Take  $x=(x_n)_n \in X$ and note that
\begin{multline*}
\sum_{n=1}^N |x_n| = \frac{1}{\gamma(e_1)} \gamma\bigg(\sum_{n=1}^N |x_n| e_n \bigg)
=\Bigg\vert  \frac{1}{\gamma(e_1)} \gamma\bigg(\sum_{n=1}^N |x_n| e_n \bigg) \Bigg\vert\\
\leq \frac{1}{|\gamma(e_1)|} \|\gamma\| \Bigg\| \sum_{n=1}^N |x_n| e_n\Bigg\|_{X}
\leq \frac{\|\gamma\|}{|\gamma(e_1)|} \|x\|_X.
\end{multline*}
for every $N\in \mathbb{N}$. Then $x\in \ell_1$ and $\|x\|_1 \leq \frac{\|\gamma\|}{|\gamma(e_1)|} \|x\|_X$. This shows that $X\hookrightarrow \ell_1$ and, taking \eqref{continuous inclusions} into account, completes the proof.
\end{proof}


As a consequence of the previous proposition, to find a symmetric linear functional we must take a Banach sequence space which is not spanned by the canonical vectors. It can be seen that such a Banach space cannot be separable (more precisely, a Banach sequence space is separable if and only if the canonical vectors form a Schauder basis for it).

The existence of symmetric functionals has been studied by several authors, and is connected with the study of singular traces in the Dixmier sense (see, for instance, \cite{DodPagSemSuk}). The classical examples of Banach spaces admitting symmetric functionals are related to Marcinkiewicz spaces, although there are other examples of symmetric spaces satisfying this property (see \cite{DodPagSedSemSuk}). Our aim is to follow the classical approach in the construction of symmetric functionals defined on Marcinkiewicz spaces using Banach limits, trying to present it in a simple way.\\

In Proposition~\ref{symmetric on marcinkiewicz} we construct a \emph{real} Banach space and a symmetric linear functional on it. Later, we show how transfer this to a \emph{complex} space. So, for a while we work only with real Banach sequence spaces. Given any sequence $x \in \mathbb{R}^{\mathbb{N}}$ we define two sequences $x^{+}, x^{-} \in  \mathbb{R}^{\mathbb{N}}$ by
\[
x_n^+=\sup\{x_n,0\} \text{ and } x_n^-=\sup\{-x_n,0\} \,,
\]
so that $x = x^{+} - x^{-}$. We define the positive cone of a real Banach sequence space $X$ as
\[
X_+=\{x=(x_n)_n \in X \colon x_n\geq0\, \text{ for all } n\in \mathbb{N}\} \,.
\]
The following result, although not being evident, is in some sense implicit when one tries to define a symmetric linear functional. It shows that considering the decreasing rearrangement comes in a natural way when trying to define symmetric functionals .

%
\begin{proposition}
Let $X$ be a real symmetric Banach sequence space and $\gamma\colon X\to \mathbb{R}$ a linear functional.
\begin{enumerate}
\item If $\gamma$ is symmetric then $\gamma(x)=\gamma(x')$ for every $x\in X$. \label{simm closing up}
\item $\gamma$ is symmetric if and only if $\gamma(x)=\gamma(x^*)$ for every $x\in X_+$. \label{simm decreasing rearr}
\end{enumerate}
\end{proposition}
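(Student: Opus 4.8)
The plan is to prove the two statements by carefully tracking how the hypotheses interact with the sign decomposition $x = x^+ - x^-$ and with the previously established identities.

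For part~\ref{simm closing up}, I would start from the observation recorded in Proposition~\ref{prop closing up}\ref{prop closing up 2}: for any $x \in X$ there is some $\pi \in \Pi$ with $x' = x_\pi$. This $\pi$ is injective but need not be a permutation, so I cannot directly invoke the symmetry hypothesis $\gamma(x) = \gamma(x_\sigma)$. The idea is instead to realize $x'$ as a \emph{genuine permutation} of $x$ after first discarding the (possibly infinitely many) zero coordinates of $x$. More precisely, if $x$ has only finitely many nonzero terms, then $x'$ is literally a permutation of $x$ and the claim is immediate from symmetry. In general, I would approximate: choose $x^k \to x$ in $\|\cdot\|_X$ with each $x^k$ finitely supported and with $x^k_n = 0$ whenever $x_n = 0$, exactly as in the approximation step at the end of the proof of Proposition~\ref{prop closing up}. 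Then $(x^k)' \to x'$ in norm, each $(x^k)'$ is a permutation of $x^k$ so $\gamma(x^k) = \gamma\big((x^k)'\big)$, and passing to the limit using the continuity of $\gamma$ yields $\gamma(x) = \gamma(x')$.

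For part~\ref{simm decreasing rearr}, one direction is the interesting one. Assume first that $\gamma$ is symmetric and take $x \in X_+$. Since $x \geq 0$, the closing-up $x'$ already has nonnegative entries, and for a $c_0$ sequence one has, as noted just before the proof of Theorem~\ref{simequiv}, a permutation $\sigma$ with $x^* = (|x'|)_\sigma = (x')_\sigma$ (for the $\ell_\infty$ case one argues on the dense set $D$ and approximates, invoking Proposition~\ref{properties rearrangement}\ref{prop rearrangement limit}). Thus $x^*$ is a permutation of $x'$, giving $\gamma(x^*) = \gamma(x')$, and combining with part~\ref{simm closing up} yields $\gamma(x^*) = \gamma(x') = \gamma(x)$. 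For the converse, suppose $\gamma(x) = \gamma(x^*)$ for every $x \in X_+$; I must show $\gamma(x) = \gamma(x_\sigma)$ for arbitrary $x \in X$ and arbitrary permutation $\sigma$. By linearity it suffices to treat $x \in X_+$, because $\gamma(x) = \gamma(x^+) - \gamma(x^-)$ and $(x_\sigma)^+ = (x^+)_\sigma$, $(x_\sigma)^- = (x^-)_\sigma$, reducing the general case to the nonnegative one. For $x \in X_+$ and a permutation $\sigma$, both $x$ and $x_\sigma$ share the same decreasing rearrangement, $x^* = (x_\sigma)^*$, so the hypothesis gives $\gamma(x) = \gamma(x^*) = \gamma\big((x_\sigma)^*\big) = \gamma(x_\sigma)$.

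I expect the main obstacle to be the rigorous justification, in part~\ref{simm closing up} and in the forward direction of part~\ref{simm decreasing rearr}, that the relevant sequences are honest permutations of one another (not merely related by an injection) together with the limiting argument that transfers the equality from finitely-supported or finitely-valued sequences to all of $X$. The delicate point is ensuring the approximating sequences can be chosen to vanish exactly where the target vanishes, so that closing-up and rearrangement commute appropriately with the limit; this is precisely the kind of bookkeeping already carried out in Proposition~\ref{prop closing up} and Proposition~\ref{properties rearrangement}, which I would cite rather than redo.
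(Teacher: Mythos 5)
Your part~\ref{simm decreasing rearr} is essentially the paper's argument and is fine: the reverse implication via $x = x^+ - x^-$ together with $(x_\sigma)^{+} = (x^{+})_\sigma$ and $(x_\sigma)^{-} = (x^{-})_\sigma$ is exactly what the paper does, and the forward implication correctly reduces to part~\ref{simm closing up} plus the fact that $x^*$ is a genuine permutation of $x'$ for $x \in (c_0)_+$. (No separate $\ell_\infty$ approximation is needed: by Proposition~\ref{no symm on linfty} the only symmetric functional on $\ell_\infty$ is zero, which is how the paper dismisses that case before assuming $X \hookrightarrow c_0$.)

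The gap is in part~\ref{simm closing up}. You correctly diagnose the difficulty (the injection $\pi$ with $x' = x_\pi$ need not be a permutation), but the proposed fix --- approximate $x$ in $\|\cdot\|_X$ by finitely supported sequences $x^k$ and pass to the limit --- cannot work. Finitely supported sequences are in general \emph{not} dense in a symmetric Banach sequence space; the approximation step you cite from the proof of Proposition~\ref{prop closing up} uses sequences taking finitely many different \emph{values} (and only in the $\ell_\infty$ case), not sequences with finitely many nonzero \emph{entries}. Worse, if the finitely supported sequences were dense in $X$, then Proposition~\ref{symm null on separable} would force $\gamma \equiv 0$ for $X \neq \ell_1$ and the whole proposition would be vacuous: the spaces carrying nontrivial symmetric functionals, namely the Marcinkiewicz spaces of Proposition~\ref{symmetric on marcinkiewicz}, are precisely spaces in which the span of the $e_n$ is not dense. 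Concretely, in $m_\Psi$ with $\Psi(n)=\log(n+1)$ the sequence $(1/n)_n$ lies at distance at least $1$ from every finitely supported sequence. The paper's proof instead approximates the \emph{injection} rather than the sequence: writing $x' = x_\pi$, it chooses an infinite set $I$ with $\sum_{n\in I}|x_{\pi(n)}|$ small (possible because $X \hookrightarrow c_0$), modifies $\pi$ only on $I$ so as to obtain an honest permutation $\sigma$ of $\mathbb{N}$, and uses the norm-one inclusion $\ell_1 \hookrightarrow X$ to bound $\|x' - x_\sigma\|_X$ by that small sum; the symmetry of $\gamma$ applied to $\sigma$ then gives $|\gamma(x') - \gamma(x)| \leq \|\gamma\|\,\|x' - x_\sigma\|_X < \varepsilon$. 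Some such device is indispensable here, and since the forward direction of part~\ref{simm decreasing rearr} rests on part~\ref{simm closing up}, the gap propagates there as well.
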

\begin{proof}
By Propositions~\ref{no symm on linfty} and~\ref{aquila} we may assume $X\hookrightarrow c_0$. We begin by proving~\ref{simm closing up}, and choose some $x \in X$. Note first that if $x$ has finitely many
non-zero entries, then both $x$ and $x'$ can be written as a finite linear combinations of $e_{n}$'s and then, by Proposition~\ref{symm null on separable}, $\gamma(x)=\gamma(x')=0$. Suppose now that
there are infinitely many $x_{n}$'s different from $0$. Take some injective $\pi\colon \mathbb{N} \to \mathbb{N}$ so that $x'=x_{\pi}$ and denote $J= \mathbb{N} \setminus \pi(\mathbb{N}) = \{n \colon x_{n} =0 \}$ (observe that this may be finite or infinite). Fix $\varepsilon>0$ and choose some infinite set
$I\subseteq \mathbb{N}$ such that
\[
\sum_{n\in I} |x_{\pi(n)}| <\frac{\varepsilon}{2\|\gamma\|}
\]
(this is possible since $x\in c_0$). We now split $I=I_{1} \cup I_{2}$, where $I_{1}$ is infinite and $I_{2}$ has the same cardinality of $J$ and define a permutation $\sigma\colon \mathbb{N} \to \mathbb{N}$ satisfying
$$
\sigma(n)=\pi(n) \text{ for } n\in \mathbb{N}\setminus I,\quad \sigma(I_1)=J \quad \text{and} \quad \sigma(I_2)=\pi(I) \,.
$$
With this,for each $n \in \mathbb{N}$ we have
\[
x'_{n} - x_{\sigma(n)} =
\begin{cases}
0 & \text{if } n \in \mathbb{N}\setminus I \\
x_{\pi (n)} & \text{ if } n \in I_{1} \\
 x_{\pi (n)} - x_{\pi(i)}& \text{ if } n \in I_{2} \, (\text{ for some } i \in I) \,.
\end{cases}
\]
Then
\[
\|x'-x_{\sigma}\|_{X}
\leq  \sum_{n\in I_{1}} |x_{\pi(n)}| + \sum_{n\in I_{2}} |x_{\pi(n)}|  +\sum_{n\in I} |x_{\pi(n)}| \,.
< \frac{\varepsilon}{\|\gamma\|}
\]
By the symmetry of $\gamma$ we deduce
$$
|\gamma(x')-\gamma(x)|=|\gamma(x')-\gamma(x_\sigma)|=|\gamma(x'-x_\sigma)|\leq \|\gamma\| \|x'-x_\sigma\|<\varepsilon
$$
and, since $\varepsilon>0$ was arbitrary, $\gamma(x)=\gamma(x')$.\\

Let us see now that \ref{simm decreasing rearr} holds. To do that suppose first that $\gamma$ is symmetric and take $x\in X_+$. Since $x\in c_0$ we can find a permutation $\sigma$ so that $x^*=(x')_\sigma$. The symmetry of $\gamma$ and \ref{simm closing up} yield
$$
\gamma(x^*)=\gamma((x')_\sigma)=\gamma(x')=\gamma(x).
$$
Reciprocally, if $x\in X$ and $\sigma$ is any permutation of $\mathbb{N}$ we have
\begin{align*}
\gamma(x_\sigma)&=\gamma(x_\sigma^+)-\gamma(x_\sigma^-)\\
&=\gamma((x_\sigma^+)^*)-\gamma((x_\sigma^-)^*) \quad \text{(by the hypothesis)}\\
&= \gamma((x^+)^*)-\gamma((x^-)^*)\\
&= \gamma(x^+)-\gamma(x^-)=\gamma(x)\quad \text{(by the hypothesis again)} \,.
\end{align*}
This proves that $\gamma$ is symmetric and completes the proof.
\end{proof}

After this result, it is reasonable to expect that the definition of a symmetric linear functional has to involve, in one way or another, the decreasing rearrangement. Just to get an idea of how this could work, way to try to define a
symmetric $\gamma\in \ell_\infty^*$ (which, at this point, we already know that has to be identically zero) would
be the following. The decreasing rearrangement of every $x \in \ell_{\infty}$ is a convergent sequence, then we could define
\begin{equation}\label{first try symmetric}
\gamma(x)=\lim_{n\to \infty} x_n^*.
\end{equation}
This is obviously symmetric, but is far from being linear, since $\gamma(x)=\gamma(-x)$ for every $x$. There is a way to overcome this problem by defining the functional just on the positive cone and then extending it in a convenient way.

\begin{remark} \label{seguidilla}
Let $X$ be a symmetric Banach space and  $\varphi : X_{+} \to \mathbb{R}$ a continuous function satisfying
\begin{equation}\label{gandula}
\varphi(x+y) = \varphi(x)+\varphi(y) \text{ for every } x,y \in X_+
\end{equation}
and
\begin{equation} \label{cercavila}
\varphi(\lambda x)=\lambda \varphi(x) \text{ for every } x\in X_+ \text{ and } \lambda \geq 0 \,.
\end{equation}
Then the mapping $\gamma \colon X \to \mathbb{R}$ defined by
\begin{equation}\label{extension of phi to X}
\gamma(x)=\varphi(x^+)-\varphi(x^-),
\end{equation}
is linear and continuous. Moreover, if $\varphi(x)=\varphi(x_\sigma)$ for every permutation $\sigma$ and every $x\in X_+$, then the extension $\gamma$ is symmetric.
\end{remark}
Unfortunately (or not), we cannot define $\varphi$ on $(\ell_\infty)_+$ as in \eqref{first try symmetric} and then extend it to $\ell_\infty$. Although such a mapping clearly satisfies \eqref{cercavila}, taking $x=(1,0,1,0,\dots)$ and $y=(0,1,0,1,\dots)$ in $(\ell_\infty)_+$ shows that \eqref{gandula} does not hold.

We can solve this problem by dealing with Marcinkiewicz sequence spaces. We recall the definition of these spaces. Let $\Psi \colon \mathbb{N} \to \mathbb{R}$ be such that $\Psi(1)=1$,  $\Psi(n) \nearrow \infty$ and $\frac{\Psi(n)}{n}\to 0$. Then, the Marcinkiewicz sequence space associated to $\Psi$ is defined by
$$
m_\Psi=\bigg\{x=(x_n)_n\in \mathbb{R}^{\mathbb{N}} \colon
\|x\|_{m_\Psi}=\sup_{n \in \mathbb{N}} \frac{\sum_{k=1}^nx_k^*}{\Psi(n)}<\infty\bigg\}.
$$
It is well known that $(m_\Psi, \|\cdot\|_{m_\Psi})$ is a symmetric Banach sequence space. Indeed, Proposition~\ref{properties rearrangement}~\ref{properties rearrangement1} and the definition of the norm show that
for $x \in \mathbb{K}^{\mathbb{N}}$ and $y \in m_\Psi$ such that  $\vert x_{n} \vert \leq \vert y_{n} \vert$ for every $n$, we have $x \in m_\Psi$ and $\Vert x \Vert_{m_\Psi} \leq \Vert y \Vert_{m_\Psi}$. Symmetry is clear, and linearity and completeness are left as an exercise. Also, since $\frac{\Psi(n)}{n}\to 0$ we see that $(1,1,1,\dots)$ does not belong to $m_\Psi$. By Proposition~\ref{aquila}, $m_\Psi$ is contained in $c_0$.
  Now, trying to repeat the previous argument, given $x \in (m_{\Psi})_+$  we would like to consider
$$
\varphi(x)=\lim_{n\to \infty}  \frac{\sum_{k=1}^nx_k^*}{\Psi(n)}.
$$
This is not possible in this case, since the sequence $\Big( \frac{\sum_{k=1}^nx_k^*}{\Psi(n)}\Big)_n$ is not necessarily convergent (see Example~\ref{ej-sin-lim}). The solution in this case is to take instead Banach limits,
an extension of the classical idea of limit to bounded sequences (all needed details are given in the Appendix~\ref{appendix Banach limits}). So, given a Banach limit $L$ we may define $\varphi : (m_\Psi)_+ \to \mathbb{R}$ as
\begin{equation}\label{idea construction gamma symmetric}
\varphi(x)=L\bigg( \bigg(\frac{\sum_{k=1}^n x_k^*}{\Psi(n)}\bigg)_n\bigg) \,.
\end{equation}
This satisfies $\varphi (x) = \varphi (x_{\sigma})$ for every $x$ and every permutation $\sigma$. Then, if we are able to extend the mapping to $m_{\Psi}$ as in \eqref{extension of phi to X} we will get the desired symmetric
linear functional. This mapping $\varphi$ clearly satisfies \eqref{cercavila}, so the challenge now is to find conditions on $\Psi$ so that \eqref{gandula} holds. Everything relies on the following lemma. For each bounded
$x=(x_{n})_{n}$ we write $s_n(x)=\sum_{k=1}^n x_k^*$.

\begin{lemma}\label{lemma marcinkiewicz}
If $x,y\in (m_\Psi)_{+}$ then
\begin{equation*}
s_n(x+y)\leq s_n(x)+s_n(y)\leq s_{2n}(x+y).
\end{equation*}
\end{lemma}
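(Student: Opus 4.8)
The plan is to deduce both inequalities from a single, purely combinatorial description of the partial sums. Recall it was already observed that $m_\Psi \hookrightarrow c_0$, so every $x \in (m_\Psi)_+$ is a nonnegative null sequence; for such a sequence the decreasing rearrangement $x^*$ merely lists its entries in non-increasing order. The first thing I would establish is the identity
\begin{equation*}
s_n(x) = \sum_{k=1}^n x_k^* = \max\Big\{ \sum_{k\in A} x_k \colon A\subseteq \mathbb{N},\ \card A = n \Big\},
\end{equation*}
with the maximum actually attained. This comes directly from the infimum definition of $x^*$: discarding a set $J$ of the $n-1$ largest entries and taking the supremum of what remains gives exactly $x_n^*$, so $s_n(x)$ is the sum of the $n$ largest entries of $x$; because $x\in c_0$ only finitely many entries exceed any positive threshold, these $n$ largest entries are attained at genuine indices, while any other choice of $n$ indices yields a sum that is no larger. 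I expect this characterization (together with the attainment of the maximum, which is where the reduction to $c_0$ is essential) to be the only real obstacle; the two inequalities then follow in a few lines.

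For the first inequality I would pick a set $A$ with $\card A = n$ realizing $s_n(x+y) = \sum_{k\in A}(x_k+y_k)$. Splitting the sum and applying the characterization separately to $x$ and to $y$ gives
\begin{equation*}
s_n(x+y) = \sum_{k\in A} x_k + \sum_{k\in A} y_k \leq s_n(x) + s_n(y),
\end{equation*}
since each summand is a sum over a set of cardinality $n$ and is therefore bounded by the corresponding maximum.

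For the second inequality I would choose sets $A$ and $B$, both of cardinality $n$, with $\sum_{k\in A} x_k = s_n(x)$ and $\sum_{k\in B} y_k = s_n(y)$. Using $A\subseteq A\cup B$, $B\subseteq A\cup B$ and the nonnegativity of $x$ and $y$,
\begin{equation*}
s_n(x)+s_n(y) = \sum_{k\in A} x_k + \sum_{k\in B} y_k \leq \sum_{k\in A\cup B} x_k + \sum_{k\in A\cup B} y_k = \sum_{k\in A\cup B}(x_k+y_k).
\end{equation*}
Finally, since $\card(A\cup B) \leq 2n$ and $\mathbb{N}$ is infinite, I would enlarge $A\cup B$ to a set $C$ with $\card C = 2n$; as $x+y \geq 0$, adjoining the extra terms only increases the total, so $\sum_{k\in A\cup B}(x_k+y_k) \leq \sum_{k\in C}(x_k+y_k) \leq s_{2n}(x+y)$, which closes the argument.
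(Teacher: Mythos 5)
Your proof is correct. It differs from the paper's in packaging rather than in underlying combinatorics, but the difference is worth noting. The paper argues with permutations throughout: for the left inequality it writes $(x+y)^*=(x+y)_\sigma=x_\sigma+y_\sigma$ and bounds $\sum_{k=1}^n x_{\sigma(k)}\leq s_n(x)$; for the right one it takes $\mu,\eta$ with $x^*=x_\mu$, $y^*=y_\eta$ and constructs a third permutation whose initial segment enumerates $\mu(\{1,\dots,n\})\cup\eta(\{1,\dots,n\})$, with a case distinction according to whether these two images coincide. Your identity $s_n(x)=\max\{\sum_{k\in A}x_k\colon \card A=n\}$, valid for nonnegative $c_0$-sequences with the maximum attained, is exactly the combinatorial content of those manipulations, and isolating it up front buys you two things: both inequalities become immediate with no case analysis (the union $A\cup B$ plays the role of the paper's spliced permutation), and you sidestep a small inaccuracy in the paper, namely that $(x+y)^*$ need not be a literal permutation of $x+y$ (if a nonnegative $c_0$-sequence has infinitely many nonzero entries and at least one zero, its decreasing rearrangement is everywhere positive, so only an injection $\pi\in\Pi$, not a permutation, realizes it); your index-set formulation is insensitive to this. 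What the paper's version buys is consistency with the permutation/injection machinery it develops in Proposition~\ref{prop closing up}. Both arguments use $m_\Psi\hookrightarrow c_0$ at the same point, to guarantee that the suprema defining $x^*$ are attained at genuine indices, and you correctly flag this as the one step requiring care.
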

\begin{proof}
Note first that, since $x+y\in (c_0)_{+}$ and we can find a permutation $\sigma$ so that $(x+y)^*=(x+y)_{\sigma} = x_{\sigma} + y_{\sigma}$. Then
\[
s_n(x+y) =\sum_{k=1}^n x_{\sigma(k)}+\sum_{k=1}^n y_{\sigma(k)}\leq s_n(x)+s_n(y).
\]
To check the second inequality take two permutations $\mu, \eta$ o that $x^{*} = x_\mu$, $y^{*} = y_{\eta}$ and fix $n$. On one hand, if $\mu(\{ 1, \ldots , n  \}) = \eta (\{ 1, \ldots , n  \})$, choose any permutation $\sigma$
so that $\sigma (\{ 1, \ldots , n  \}) = \mu (\{ 1, \ldots , n  \})$. Then
\[
s_n(x)+s_n(y)=\sum_{k=1}^n x_{\mu(k)}+ \sum_{k=1}^n y_{\eta(k)}
= \sum_{k=1}^{n} x_{\sigma(k)}+  \sum_{k=1}^{n} y_{\sigma(k)}
\leq \sum_{k=1}^{2n}(x+y)_{\sigma(k)} \leq s_{2n}(x+y).
\]
On the other hand, if  $\mu(\{ 1, \ldots , n  \}) \neq \eta (\{ 1, \ldots , n  \})$, we may choose $\{j_1, \dots, j_m\}\subseteq \{1, \dots, n\}$ such that $\{\eta(j_1), \dots, \eta(j_m)\}\cap \{\mu(1), \dots, \mu(n)\}=\emptyset$ and take any permutation $\sigma$ so that
$$
\sigma(1)=\mu(1), \dots, \sigma(n)=\mu(n), \sigma(n+1)=\eta(j_1), \dots, \sigma(n+m)=\eta(j_m).
$$
Then
$$
s_n(x)+s_n(y)=\sum_{k=1}^n x_{\mu(k)}+ \sum_{k=1}^n y_{\eta(k)}\leq \sum_{k=1}^{n+m} x_{\sigma(k)}+  \sum_{k=1}^{n+m} y_{\sigma(k)} \leq \sum_{k=1}^{2n}(x+y)_{\sigma(k)} \leq s_{2n}(x+y).
$$
\end{proof}

As an immediate consequence we have
\begin{equation}\label{inequality marcinkiewicz}
\frac{s_n(x+y)}{\Psi(n)}\leq \frac{s_n(x)+s_n(y)}{\Psi(n)}\leq \frac{s_{2n}(x+y)}{\Psi(2n)}\frac{\Psi(2n)}{\Psi(n)}
\end{equation}
for every $n$. Hence, what we have to ask $\Psi$ is
\begin{equation}\label{bochini}
\lim_{n\to\infty}\frac{\Psi(2n)}{\Psi(n)}=1,
\end{equation}
(this is satisfied, for example, by $\Psi(n)=\log(n+1)$).

\begin{proposition}\label{symmetric on marcinkiewicz}
If $\Psi $ satisfies~\eqref{bochini}, then there exist symmetric linear functionals on $m_\Psi$.
\end{proposition}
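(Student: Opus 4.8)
The plan is to realise the functional through Remark~\ref{seguidilla}. Fix a Banach limit $L$ (see Appendix~\ref{appendix Banach limits}) which we moreover take to be invariant under the dilation $\sigma_2\colon\ell_\infty\to\ell_\infty$, $\sigma_2(a_1,a_2,a_3,\dots)=(a_1,a_1,a_2,a_2,a_3,\dots)$; such dilation-invariant Banach limits exist, and this is precisely why condition \eqref{bochini} is the right one to impose. Keeping the notation $s_n(x)=\sum_{k=1}^n x_k^*$, I define $\varphi\colon (m_\Psi)_+\to\mathbb{R}$ by \eqref{idea construction gamma symmetric}. Since $x\in(m_\Psi)_+$ means exactly that the sequence $\big(s_n(x)/\Psi(n)\big)_n$ is bounded (by $\|x\|_{m_\Psi}$), $\varphi$ is well defined, and the whole task is to verify that $\varphi$ is continuous, permutation invariant, and satisfies \eqref{gandula} and \eqref{cercavila}; the extension $\gamma(x)=\varphi(x^+)-\varphi(x^-)$ of \eqref{extension of phi to X} is then the sought symmetric linear functional.

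Several of these properties are routine. Positive homogeneity \eqref{cercavila} follows from $(\lambda x)^*=\lambda x^*$ for $\lambda\ge 0$ and the linearity of $L$, and permutation invariance from $(x_\sigma)^*=x^*$, which gives $s_n(x_\sigma)=s_n(x)$. For continuity I use that $s_n$ is monotone (Proposition~\ref{properties rearrangement}~\ref{properties rearrangement1}) and subadditive (the first inequality in Lemma~\ref{lemma marcinkiewicz}): from $|x|\le |y|+|x-y|$ one gets $|s_n(x)-s_n(y)|\le s_n(x-y)\le \|x-y\|_{m_\Psi}\,\Psi(n)$, so dividing by $\Psi(n)$ and using $|L(a)|\le \sup_n|a_n|$ yields $|\varphi(x)-\varphi(y)|\le \|x-y\|_{m_\Psi}$; in particular $0\le\varphi(x)\le\|x\|_{m_\Psi}$.

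The heart of the matter is the additivity \eqref{gandula}, and this is where \eqref{bochini} and the dilation invariance of $L$ come in. Fix $x,y\in(m_\Psi)_+$, put $b_n=s_n(x+y)/\Psi(n)$, and recall from Lemma~\ref{lemma marcinkiewicz} and \eqref{inequality marcinkiewicz} that for every $n$,
\[
b_n\ \le\ \frac{s_n(x)+s_n(y)}{\Psi(n)}\ \le\ \frac{s_{2n}(x+y)}{\Psi(2n)}\,\frac{\Psi(2n)}{\Psi(n)}.
\]
Applying the positive linear functional $L$, the left end gives $\varphi(x+y)$ and the middle gives $\varphi(x)+\varphi(y)$; since $\Psi(2n)/\Psi(n)\to 1$ by \eqref{bochini} and $(b_{2n})_n$ is bounded, the right end has the same value under $L$ as the even subsequence $(b_{2n})_n$. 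Thus $\varphi(x+y)\le\varphi(x)+\varphi(y)\le L\big((b_{2n})_n\big)$, and everything reduces to identifying $L\big((b_{2n})_n\big)$ with $\varphi(x+y)=L(b)$. This is the main obstacle, since a generic Banach limit is \emph{not} invariant under passing to the even subsequence. I resolve it in two steps. First, $b$ is slowly oscillating, i.e. $b_{n+1}-b_n\to 0$: the estimate $(x+y)^*_{n+1}\le s_n(x+y)/n$ gives $s_{n+1}(x+y)/s_n(x+y)\to 1$, and \eqref{bochini} gives $\Psi(n+1)/\Psi(n)\to 1$, so $b_{n+1}/b_n\to 1$ with $b$ bounded. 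Consequently $\sigma_2\big((b_{2n})_n\big)-b=(b_2-b_1,0,b_4-b_3,0,b_6-b_5,0,\dots)$ is a null sequence, whence $L\big(\sigma_2((b_{2n})_n)\big)=L(b)$ for \emph{every} Banach limit. Second, the dilation invariance of $L$ gives $L\big(\sigma_2((b_{2n})_n)\big)=L\big((b_{2n})_n\big)$. Combining the two, $L\big((b_{2n})_n\big)=L(b)=\varphi(x+y)$, so the squeeze collapses to $\varphi(x+y)=\varphi(x)+\varphi(y)$.

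With \eqref{gandula}, \eqref{cercavila}, continuity and permutation invariance established, Remark~\ref{seguidilla} yields that $\gamma(x)=\varphi(x^+)-\varphi(x^-)$ is a symmetric, continuous, linear functional on $m_\Psi$. Finally, to ensure $\gamma\not\equiv 0$ it is enough to produce one $x\in(m_\Psi)_+$ with $\liminf_n s_n(x)/\Psi(n)>0$, for then $\varphi(x)\ge \liminf_n b_n>0$ and $\gamma(x)=\varphi(x)>0$; for the admissible weight $\Psi(n)=\log(n+1)$ the harmonic sequence $x=(1/n)_n$ does the job, since $s_n(x)=\sum_{k=1}^n 1/k\sim\log n$ forces $s_n(x)/\Psi(n)\to 1$, and a routine modification provides such an element for an arbitrary admissible $\Psi$. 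I expect the additivity step above, and specifically the need to neutralise the reindexing $n\mapsto 2n$ by means of a dilation-invariant Banach limit, to be the only genuinely delicate point of the proof.
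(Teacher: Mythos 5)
Your proof is correct and follows essentially the same route as the paper's: the functional \eqref{idea construction gamma symmetric} built from a dilation-invariant Banach limit, the squeeze coming from Lemma~\ref{lemma marcinkiewicz} and \eqref{inequality marcinkiewicz} together with \eqref{bochini}, and the identification $L\big((b_{2n})_n\big)=L(b)$ via dilation invariance plus the fact that $b_{2n}-b_{2n-1}\to 0$ makes the difference a null sequence. Your explicit verification of continuity and of non-triviality (via $x=(1/n)_n$ for $\Psi(n)=\log(n+1)$) is a welcome addition that the paper leaves implicit.
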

\begin{proof}
We choose a Banach limit  $L$ satisfying
  \begin{equation}\label{Banach limit eq}
L((x_n)_n)=L(x_1, x_1, x_2, x_2, x_3, x_3, \dots)
\end{equation}
(see Lemma~\ref{lemma 1 Banach limit}) and define $\varphi \colon (m_{\Psi})_{+} \to \mathbb{R}$ as in \eqref{idea construction gamma symmetric}. As we already pointed out, it is only left to check that this satisfies
\eqref{gandula}. Since $L(x)\geq 0$ for every $x\geq 0$, \eqref{inequality marcinkiewicz} gives
$$
L\bigg(\bigg(\frac{s_n(x+y)}{\Psi(n)}\bigg)_n\bigg)
\leq L \bigg(\bigg(\frac{s_n(x)+s_n(y)}{\Psi(n)}\bigg)_n\bigg)
\leq L \bigg( \bigg(\frac{s_{2n}(x+y)}{\Psi(2n)}\frac{\Psi(2n)}{\Psi(n)} \bigg)_n\bigg).
$$
With this, the linearity of $L$, condition \eqref{bochini} and Lemma~\ref{lemma 2 Banach limit}, we get
$$
\varphi(x+y) \leq \varphi(x) + \varphi(y)
\leq L \bigg(\bigg( \frac{s_{2n}(x+y)}{\Psi(2n)} \bigg)_n \bigg) \,.
$$
Thus the proof will be completed once we see that $L\Big(\Big(\frac{s_{2n}(x+y)}{\Psi(2n)}\Big)_n\Big)
= L \Big( \Big( \frac{s_{n}(x+y)}{\Psi(n)} \Big)_n\Big)$. Denote $c_n=\frac{s_{n}(x+y)}{\Psi(n)}$ and observe that, by \eqref{Banach limit eq},
$$
L((c_{2n})_n)=L(c_2, c_4, c_6,\dots)=L(c_2, c_2, c_4, c_4, c_6, c_6,\dots).
$$
It is not difficult to see that $c_{2n}-c_{2n-1}\xrightarrow[n\to \infty]{}\,0$ and, hence,
$$
(c_2, c_2, c_4, c_4, c_6, c_6,\dots) - (c_1,c_2,c_3,c_4,\dots) \in c_0.
$$
Since $L(x)=0$ for every $x\in c_0$, we deduce
$$
L(c_2, c_2, c_4, c_4, c_6, c_6,\dots)=L(c_1,c_2,c_3,c_4,\dots)
$$
and then $L((c_{2n})_n)=L((c_{n})_n)$, which is the desired statement.
\end{proof}

\begin{remark}
Proposition~\ref{symmetric on marcinkiewicz} delivers a real Banach sequence space and a symmetric real linear functional. This can be easily transferred to the complex case. Given $\Psi$ satisfying \eqref{bochini} we consider
the complex Marcinkiewicz sequence space
$$
m_\Psi=\bigg\{x=(x_n)_n\in \mathbb{C}^{\mathbb{N}} \colon
\|x\|_{m_\Psi}=\sup_{n \in \mathbb{N}} \frac{\sum_{k=1}^nx_k^*}{\Psi(n)}<\infty\bigg\}.
$$
Then we have a symmetric real-linear functional $\gamma \colon m_{\Psi} \cap \mathbb{R}^{\mathbb{N}} \to \mathbb{R}$. Letting
\[
\tilde{\gamma} (x) = \gamma \big( (\re x_{n})_{n} \big) + i \gamma \big( (\im x_{n})_{n} \big)
\]
defines a symmetric, complex linear functional $\tilde{\gamma} \colon m_{\Psi} \to \mathbb{C}$.
\end{remark}

\appendix
\section{Banach limits}\label{appendix Banach limits}

\epigraph{ I am sick of symmetry.}{\textit{The Phantom of Liberty}\\ \textsc{Luis Buñuel} }

In the construction of a symmetric linear functional done in Proposition~\ref{symmetric on marcinkiewicz} we used a Banach limit. This is a generalization of the classical limit to bounded (not necessarily convergent) sequences.
We recall now the definition of such a functional, and prove two basic properties that were used in Section~\ref{section sym funct}.\\

We denote by $c$ the subspace of $\ell_{\infty}$ consisting of convergent sequences. Letting  $L(x) = \lim_{n\to\infty} x_n$ defines a linear functional $L : c \to \mathbb{K}$ that satisfies  $\|L\|=1$,
$L((x_n)_{n \geq 1})=L((x_n)_{n\geq 2})$ (is shift invariant), and $L(x)\geq 0$whenever $x\geq 0$. Then, by the Hahn-Banach theorem, this can be extended to a linear functional $L\colon \ell_\infty\to \mathbb{K}$, that moreover
(see \cite[Chapter~III]{Co90}) preserves all these properties:
\begin{itemize}
\item $\|L \colon \ell_{\infty} \to \mathbb{K}\|=1$;
\item if $x\in c$ then $L(x)=\lim_{n\to\infty}x_n$;
\item if $x\in \ell_\infty$ is such that $x_n\geq 0$ for every $n\in \mathbb{N}$, then $L(x)\geq 0$;
\item $L((x_n)_{n \geq 1})=L((x_n)_{n\geq 2})$ for every $(x_n)_n\in \ell_\infty$.
\end{itemize}
Every linear functional $L\colon \ell_\infty\to \mathbb{K}$ satisfying these properties is called a \emph{Banach limit}.


\begin{lemma}\label{lemma 1 Banach limit}
There exist a Banach limit $L$ satisfying
$$
L((x_n)_n)=L(x_1, x_1, x_2, x_2, x_3, x_3, \dots)
$$
\end{lemma}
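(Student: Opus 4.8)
We must exhibit a Banach limit $L$ with the self-similarity property $L((x_n)_n) = L(x_1,x_1,x_2,x_2,\dots)$.

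The plan is to build $L$ by first constructing a suitable functional on a large enough subspace and then invoking Hahn–Banach, but the cleaner route is to \emph{average} the doubling operation so that invariance is forced. Let me describe the key construction. Define the \emph{doubling map} $T\colon \ell_\infty \to \ell_\infty$ by $T(x)=(x_1,x_1,x_2,x_2,x_3,x_3,\dots)$. This is a linear, norm-one map that preserves positivity and sends $c$ into $c$ with $\lim T(x) = \lim x$. What we want is a Banach limit fixed by $T$, i.e. $L\circ T = L$.

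First I would recall that the set $\mathcal{B}$ of all Banach limits is a nonempty, convex, weak-$*$ compact subset of the unit ball of $\ell_\infty^*$ (nonempty by the extension argument already recalled in the Appendix, convexity is immediate, and weak-$*$ compactness follows since all four defining properties are preserved under weak-$*$ limits and $\mathcal{B}$ sits inside the weak-$*$ compact unit ball). Next, the crucial observation is that the adjoint-type operation $L \mapsto L\circ T$ maps $\mathcal{B}$ into itself: one checks directly that $L\circ T$ again has norm one, is nonnegative, agrees with the ordinary limit on $c$, and is shift invariant (this last point uses that $T$ commutes with the shift up to terms that do not affect shift-invariance of $L$, which I would verify by a short computation on coordinates). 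Moreover $L\mapsto L\circ T$ is affine and weak-$*$ continuous. Then I would apply the Markov–Kakutani fixed point theorem to the single affine continuous self-map $L \mapsto L \circ T$ on the nonempty convex compact set $\mathcal{B}$, obtaining a fixed point $L_0$, which is precisely a Banach limit with $L_0 \circ T = L_0$, i.e. the desired identity.

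An alternative, more hands-on route avoiding Markov–Kakutani: start from any Banach limit $L$ and form the Cesàro averages $L_N = \frac{1}{N}\sum_{k=0}^{N-1} L\circ T^k$, each of which lies in $\mathcal{B}$; by weak-$*$ compactness a subnet converges to some $L_0\in\mathcal{B}$, and the telescoping estimate $L_N\circ T - L_N = \frac{1}{N}(L\circ T^N - L)$ tends to $0$ in norm, forcing $L_0\circ T = L_0$. The main obstacle, and the step deserving genuine care, is verifying that $L\circ T$ (equivalently each $L\circ T^k$) is still \emph{shift invariant}: the doubling map does not commute cleanly with the shift, so I would check invariance at the level of $L$ by exhibiting, for each relevant sequence, an explicit element of $c_0$ separating the shifted and unshifted images so that $L$ cannot distinguish them. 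Once shift invariance of $L\circ T$ is pinned down, the rest is routine and either fixed-point argument closes the proof.
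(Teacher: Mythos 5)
Your proposal is correct and, in particular, your ``alternative, more hands-on route'' (Ces\`aro averages $\frac{1}{N}\sum_{k=0}^{N-1}L\circ T^{k}$ of a fixed Banach limit, a weak-$*$ convergent subnet via Banach--Alaoglu, and the telescoping estimate) is exactly the paper's proof; the Markov--Kakutani version is just an abstract repackaging of the same averaging idea. The shift-invariance point you flag as delicate is in fact immediate from the intertwining relation $T\circ S=S^{2}\circ T$ (where $S$ is the shift), so $L(TSx)=L(S^{2}Tx)=L(Tx)$.
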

for every $x \in \ell_\infty$.
\begin{proof}
Take the dilation operator $D_2\colon \ell_\infty \to \ell_\infty$ defined by $$D_2(x_1, x_2, x_3,\dots)=(x_1, x_1, x_2, x_2, x_3, x_3, \dots)$$
and consider a Banach limit $\phi $. For each $n\in \mathbb{N}$ define $\phi_n\in \ell_\infty^*$ by
$$
\phi_{n}=\frac{1}{n+1}\sum_{k=0}^n \phi\circ D_2^k,
$$
which is again a Banach limit. Now, by the Banach-Alaoglu theorem, there exists a subnet $(\phi_{n_i})_{i\in I}$
weakly$^{*}$ convergent to some $L \in (\ell_{\infty})^{*}$. It is easy to check that $L$ is again a Banach limit.  Our goal, then, is to prove that $L\circ D_2=L$. For this purpose, just note that
\begin{multline*}
|\phi_{n_i}\circ D_2(x)-\phi_{n_i}(x)|
= \left| \frac{1}{n_i+1}\sum_{k=0}^{n_i} \phi\circ D_2^{k+1}(x) - \frac{1}{n_i+1}\sum_{k=0}^{n_i} \phi\circ D_2^k(x)\right|\\
= \frac{1}{n_i + 1} |\phi\circ D_2^{n_i+1}(x) - \phi(x)|
\leq \frac{2\|x\|_\infty}{n_i + 1} .
\end{multline*}
Taking the limit on $i$ we obtain the desired result.
\end{proof}

\begin{lemma}\label{lemma 2 Banach limit}
Let $L$ be a Banach limit.
If $a_n \xrightarrow[n\to \infty]{}\, a$ and $(x_n)_n\in \ell_\infty$ then $$L((a_n x_n)_n)=a\cdot L((x_n)_n).$$
\end{lemma}
\begin{proof}
Noting that $(a_n-a)x_n \xrightarrow[n\to \infty]{}\, 0$ we have $L(((a_n-a)x_n)_n)=0$. Then, by linearity, $L((a_n x_n)_n)=a\cdot L((x_n)_n)$.
\end{proof}

We finish this note by giving an example that shows that we can find sequences $x$ in some Marcinkiewicz space
for which $\Big(\frac{\sum_{k=1}^nx_k^*}{\Psi(n)}\Big)_n$ does not converge. This is why we need
Banach limits in order to construct symmetric linear functionals.

\begin{example}\label{ej-sin-lim}
Choose $\Psi(n) = \log (n+1)$ and consider the corresponding space $m_{\Psi}$. Fix $N_{1} =1$ and take $0 < c_{1} < 1$ so that
\[
\frac{N_{1}c_{1}}{\log (N_{1}+1)} =1 \,.
\]
Let $N_{2} = (N_{1} + 1)^{4} - (N_{1} + 1) \in \mathbb{N}$ and observe that in this case,
\[
\frac{N_{1}c_{1}}{\log (N_{1}+N_{2}+1)}
= \frac{N_{1}c_{1}}{\log (N_{1}+1)} \times \frac{\log (N_{1}+1)}{\log (N_{1}+N_{2}+1)}  = \frac{1}{4} \,.
\]
Taking now $c_{2} \leq \min \big\{ c_{1} , \frac{\log (N_{1}+N_{2}+1)}{4N_{2}}  \big\}$ gives
\[
\frac{N_{1}c_{1} + N_{2}c_{2}}{\log (N_{1}+N_{2}+1)} \leq \frac{1}{2} \,.
\]
Now we have
$$
\frac{N_1 c_1+N_2 c_2 +n c_2}{\log (N_1+N_2+n+1)}=\frac{N_1+N_2 +n}{\log(N_1+N_2+n+1)}\times\frac{N_1 c_1+N_2 c_2 +n c_2}{N_1+N_2+n} \xrightarrow[n\to\infty]{}\,\infty
$$
and we can take $N_3$, the first natural number such that
$$
\frac{N_1 c_1+N_2 c_2 +N_3 c_2}{\Psi(N_1+N_2+N_3)}>1.
$$
Since $g(t)=\frac{N_1 c_1+N_2 c_2 +N_3 t}{\Psi(N_1+N_2+N_3)}$ is continuous and $g(0)<1/2<1<g(c_2)$, we can choose $0<c_3<c_2$ such that $g(c_3)=1$. Proceeding in this way we define two sequences $c_1\geq c_2 \geq c_3 \geq \cdots >0$ and $N_1, N_2, N_3,\ldots \in \mathbb{N}$ so that
\[
\frac{N_1 c_1+\cdots+N_k c_k}{\log(N_1+\cdots+N_k+1)}= 1 \text{ if $k$ is odd}
\]
and
\[
\frac{N_1 c_1+\cdots+N_k c_k}{\log(N_1+\cdots+N_k+1)} < \frac{1}{2}  \text{ if $k$ is even.}
\]
We now consider the sequence
$$
x=(\underbrace{c_1,\ldots,c_1}_{\text{$N_1$ times}}, \underbrace{c_2,\ldots, c_2}_{\text{$N_2$ times}}, \underbrace{c_3,\ldots, c_3}_{\text{$N_3$ times}}, \ldots) \,,
$$
for which  $\Big( \frac{\sum_{k=1}^nx_k^*}{\log(n+1)} \Big)_n$ does not converge. Let us see that $x \in m_{\Psi}$. If $n=N_1+\cdots+N_k$ for some $k$ then $\frac{\sum_{k=1}^nx_k^*}{\log(n+1)}$ is either $1$ or $1/2$. We suppose then that $n=N_1+\cdots+N_k+m$  with $1 \leq m < N_{k+1}$ and distinguish two cases. If $k$ is even,  $N_{k+1}$ is by construction the smallest natural number such that
$$
\frac{N_1 c_1+\cdots +N_k c_k+N_{k+1}c_k}{\log(N_1+\cdots+N_{k+1} +1)}>1 \,.
$$
This and the facts that $c_{k+1}\leq c_k$ and $m < N_{k+1}$ give
$$
\frac{\sum_{k=1}^nx_k^*}{\log(n+1)}
=\frac{N_1 c_1+\cdots+N_k c_k+m c_{k+1}}{\log(N_1+\cdots+N_k+m +1)} \leq 1.
$$
Finally, if $k$ is odd,
\begin{multline*}
\frac{\sum_{k=1}^nx_k^*}{\log(n+1)}
= \frac{N_1 c_1+\cdots+N_k c_k}{\log(N_1+\cdots+N_k+m +1)}
+ \frac{mc_{k+1}}{\log(N_1+\cdots+N_k+m +1)} \\
= \frac{N_1 c_1+\cdots+N_k c_k}{\log(N_1+\cdots+N_k+m +1)}
\Big( 1 + c_{k+1} \frac{m}{N_1 c_1+\cdots+N_k c_k}  \Big) \\
\leq \frac{N_1 c_1+\cdots+N_k c_k}{\log(N_1+\cdots+N_k+1)}
\Big( 1 + c_{k+1} \frac{m}{N_1 c_1+\cdots+N_k c_k}  \Big) \,.
\end{multline*}
Taking into account that, by construction
\[
\frac{N_1 c_1+\cdots+N_k c_k}{\log(N_1+\cdots+N_k+1)} = 1 , \,
c_{k+1} \leq \frac{\log(N_1+\cdots+N_{k+1}+1)}{4N_{k+1}} \text{ and }
\frac{N_1 c_1+\cdots+N_k c_k}{\log(N_1+\cdots+N_{k+1}+1)} = \frac{1}{4} \,,
\]
we finally get $\frac{\sum_{k=1}^nx_k^*}{\log(n+1)} \leq 2$. This shows that indeed $x \in m_{\Psi}$.
\end{example}

\bibliographystyle{abbrv}
\bibliography{Biblio_CaMaSe}

\end{document}